\documentclass[12pt]{amsart}
\usepackage{amssymb,bbold}
\usepackage{euscript}
\usepackage[hypertex]{hyperref}
\usepackage[all]{xy}
\pagestyle{myheadings}
\overfullrule3pt
\catcode`\@=11
\def\@evenfoot{\rule{0pt}{20pt}[\today] \hfill}
\def\@oddfoot{\rule{0pt}{20pt}\hfill [\today]}
\catcode`\@=13
\textheight9in
\textwidth6.5in
\hoffset-2.35cm
\voffset-1cm
\parskip .5em

\def\today{July 23}

\newtheorem{theorem}{Theorem}[section]

\newtheorem{proposition}[theorem]{Proposition}
\newtheorem{lemma}[theorem]{Lemma}

\newtheorem{corollary}[theorem]{Corollary}
\theoremstyle{definition}
\newtheorem{definition}[theorem]{Definition}
\newtheorem{example}[theorem]{Example}
\newtheorem{remark}[theorem]{Remark}

\def\Lie{{\mathcal{L}{\it ie}}}\def\hh{h}
\def\calR{{\mathcal R}}
\def\Span{{\rm Span}}\def\Der{{\rm Der}}\def\Ext{{\hbox{\large $\land$}}}
\def\mfI{{\mathfrak I}}
\def\Def{{\mathfrak{Def}}}\def\Diag{{\tt D}}\def\A{{\mathbf A}}
\def\B{{\mathbf B}}\def\Com{{\EuScript C}{\it om}}
\def\twe{{{\it twi}}}\def\pAss{{p\Associative^3}}
\def\udf{{\mathfrak {Udf}}}
\def\Associative{\EuScript{A}{\it ss}}
\def\UComm{\mbox{{${\mathcal U}{\EuScript C}${\it om}}}}
\def\UAss{\mbox{\underline{${\mathcal U}{\EuScript A}${\it s}}}}
\def\ot{\otimes}\def\id{1\!\!1}\def\bbB{{\mathbb B}}\def\bbb{{\mathbb b}}
\def\End{\hbox{${\mathcal E}\hskip -.1em {\it nd}$}}
\def\bEnd{\End^B}\def\bfk{{\mathbb k}}\def\ENd{{\rm End}}
\def\Rada#1#2#3{#1_{#2},\dots,#1_{#3}}
\def\otexp#1#2{#1^{\ot #2}}
\def\calU{{\mathcal U}}\def\calP{{\mathcal P}}\def\calQ{{\mathcal Q}}
\def\calS{{\mathcal S}}\def\Int{\EuScript{I}{\it nt}}
\def\Im{{\rm Im}}\def\cot{{\widehat \ot}}\def\tr{{\it tr\/}}
\def\otred{\hbox{${\hskip -.022em \ot \hskip -.022em}$}}
\def\Def{{\mathfrak{Def}}}

\title{Universal deformation formulas}

\author{Elisabeth Remm, Martin Markl}

\thanks{The second author was supported by the Eduard \v Cech
  Institute P201/12/G028 and RVO: 67985840.}

\keywords{Deformation, algebra, twisting.}
\subjclass[2000]{58H15, 13D10}

\catcode`\@=11
\address{Mathematical Institute of the Academy, {\v Z}itn{\'a} 25,
         115 67 Prague 1, The Czech Republic}
\address{MFF UK, 186 75 Sokolovsk\'a 83, Prague 8, The Czech Republic}
\email{markl@math.cas.cz}

\address{Laboratoire de Math\'ematiques et Applications,
        Universit\'e de Haute Alsace, Facult\'e des Sciences et
        Techniques, 4, rue des Fr\`eres Lumi\`ere,
        68093~Mulhouse~cedex, France.}
\email{Elisabeth.Remm@uha.fr}
\catcode`\@=13

\baselineskip 17pt plus 2pt minus 1 pt

\begin{document}
\bibliographystyle{plain}


\begin{abstract}
We give a conceptual explanation of universal deformation formulas for
unital associative algebras and prove some results on the structure of their
moduli spaces.  We then generalize universal deformation formulas to
other types of algebras and their diagrams.
\end{abstract}

\maketitle

\tableofcontents

\baselineskip 16pt plus 1pt minus 1 pt

\section{Introduction}

Let $A$ be an associative unital algebra, and $B$ a bialgebra acting
on $A$ in a way that is compatible with the multiplication and unit of
$A$.  A universal deformation formula (abbreviated UDF) is an element
of an extension of $B \otred B$ that determines a deformation of $A$
by `twisting' the original multiplication.

The first explicit UDF was given by exponentiating commuting
derivations.\footnote{We are indebted for historical remarks in this
paragraph to Tony Giaquinto.}  This construction appeared without
proof in M.~Gerstenhaber's \cite{MR0240167}. Its particular
instance when the commuting derivations are $\partial/\partial p$ and 
$\partial/\partial q$ is known today as the Moyal product.  The
first non-abelian UDF was found in 1989 by Coll, Gerstenhaber and
Giaquinto~\cite{CGG} who analyzed a particular quantum group.  In all
these examples, the bialgebra $B$ was the universal enveloping of a
Lie algebra. An example not based on enveloping
algebras can be found in the paper~\cite{CGW} by
C{\u{a}}ld{\u{a}}raru, 
Giaquinto and Witherspoon.

The first aim of this note is to explain why the machinery of
universal deformation formulas works.
Our second aim is to prove some results about bialgebra
actions and moduli spaces of universal deformation formulas. 
Finally, we generalize the apparatus of universal deformation formulas to
other types of algebras and their diagrams.

Since our results concern general algebraic structures, it is not
surprising that language of operads is used. To make the paper more
accessible, we added Appendix~\ref{sec:operad-terminology} as fast
introduction to operads.

Our paper however contains several statements that can be understood
without speaking operads, namely Propositions~\ref{eq:11}
and~\ref{sec:formal-deformations-2}, 
Theorem~\ref{Zitra_prijede_Jaruska.},
Proposition~\ref{dnes_konference_v_Mulhouse},
Corollary~\ref{sec:deform-coming-from}, Remark~\ref{dnes_prileti_Nunyk} and
Proposition~\ref{sec:generalizations}. Most of examples also do not
depend on operads. On the other hand, some constructions described
here might be interesting also from purely operadic point of view,
namely the operad $\bbB$ defined in Section~\ref{sec:an-explanation-5}
and its `logarithmic' version $\bbb$ in
Section~\ref{sec:deformations-udfs}.

\noindent 
{\bf Conventions.}
Most of the algebraic objects will be considered
over a fixed field $\bfk$ of characteristic zero.  
The symbol $\otimes$ will denote the
tensor product over $\bfk$ and $\Span(S)$ the $\bfk$-vector space
spanned by a set $S$. We will
denote by $\id_X$ or simply by $\id$ when $X$ is understood, the identity
endomorphism of an object $X$ (set, vector space, algebra, \&c.).
We will sometimes denote the product of elements $a$ and $b$ using 
the explicit name of the multiplication (typically $\mu(a,b)$),
sometimes, when the meaning is clear from the context, by $a\cdot b$, or
simply by $ab$. We use the same convention also for the module actions.

\section{Recollection of classical results}
\label{sec:recoll-class-results-1}

In this section we briefly recall some constructions of
\cite{giaquinto,gia-zhang}.  Let $B =
(B,\cdot\,,\Delta,1,\epsilon)$ be an unital and counital
associative and coassociative bialgebra over the~ground field $\bfk$,
with the multiplication $\cdot: B \ot B \to B$, comultiplication
$\Delta : B \to B\ot B$, unit $1 \in B$ and counit $\epsilon: B \to
\bfk$. It is a standard fact that the tensor product $M \ot N$ of left
$B$-modules $M$ and $N$ bears a natural left $B$-module action
\[
b (m \ot n) := \sum ( b_{(1)}m \ot b_{(2)}n) \ \mbox { for } \ 
b \in B,\ m \in M, \ n
\in N,  
\]
with Sweedler's notation $\Delta(b) =  b_{(1)} \ot b_{(2)}$.
An associative unital algebra $A = (A,\cdot\, ,1)$ is a (left) {\em
$B$-module algebra\/} if the multiplication $\cdot : A \ot A \to A$
is $B$-linear and $b \cdot 1 = \epsilon(b) \cdot 1$ for each $b \in
B$.\footnote{The condition $b \cdot 1 = \epsilon(b) \cdot 1$ only expresses
  the $B$-linearity of the unit map $\bfk \ni \alpha  \mapsto \alpha 1
  \in A$.}
The following definition, as well as Theorem
\ref{sec:recoll-class-results-2}, can be found in~\cite{gia-zhang}.  

\begin{subequations}
\begin{definition}
\label{sec:recoll-class-results}
A {\em twisting element\/} is an element $F \in B \otred
B$ satisfying
\begin{align}
\label{d1}
[(\Delta \ot \id)(F)](F \ot 1) &= [(\id \ot \Delta)(F)](1 \ot
  F) \mbox { and }
\\
(\epsilon \ot \id)(F) = & \hskip .4em 1  = \hskip .1em (\id \ot \epsilon)(F),
\label{d2}
\end{align}
where $\id : B \to B$ denotes the identity map.
\end{definition}
\end{subequations}

Notice that~(\ref{d1}) is an equation in $\otexp B3$
while~(\ref{d2}) uses the isomorphism $\bfk \otred B \cong B
\cong B \otred \bfk$.

\begin{theorem}
\label{sec:recoll-class-results-2}
If $F \in B \otred B$
is a twisting element as in Definition
\ref{sec:recoll-class-results}, then the formula
\begin{equation}
\label{eq:5}
a * b :=  \mu_A\big(F(a \ot b)\big), \ a,b \in A,
\end{equation}
where $\mu_A  : A \ot A \to A$ is the multiplication, defines
an associative unital structure on~$A$.
\end{theorem}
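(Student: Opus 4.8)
The plan is to verify associativity and unitality of the twisted product $*$ directly, translating the defining equations~(\ref{d1}) and~(\ref{d2}) of the twisting element into the required identities via the $B$-module algebra structure. First I would unwind the definition: for $a,b,c \in A$, the element $(a*b)*c$ is obtained by first applying $F$ to $a\ot b$, multiplying via $\mu_A$, then applying $F$ to the result tensored with $c$, and multiplying again. The key observation is that $\mu_A : A\ot A \to A$ is $B$-linear, so for any $b \in B$ we have $\mu_A(b(x\ot y)) = b\,\mu_A(x\ot y)$; iterating this lets me move the action of elements of $\otexp B3$ past the two applications of $\mu_A$. Concretely, $(a*b)*c = \mu_A^{(3)}\big([(\Delta\ot\id)(F)](F\ot 1)(a\ot b\ot c)\big)$, where $\mu_A^{(3)} = \mu_A\circ(\mu_A\ot\id) = \mu_A\circ(\id\ot\mu_A)$ is the (unambiguous, by associativity of $\mu_A$) triple product, and the factor $F\ot 1 \in \otexp B3$ acts on $a\ot b\ot c$ first, followed by $(\Delta\ot\id)(F)$. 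Here I use that the left $B$-module structure on $A\ot A\ot A$ is via $\Delta^{(2)}$ together with the standard compatibility of the $B$-action on tensor products with the multiplication of $B$, so that the composite action of two elements of $\otexp B3$ is the action of their product in $\otexp B3$.

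The second step is the symmetric computation: $a*(b*c) = \mu_A^{(3)}\big([(\id\ot\Delta)(F)](1\ot F)(a\ot b\ot c)\big)$. Now equation~(\ref{d1}) asserts precisely that the two elements $[(\Delta\ot\id)(F)](F\ot 1)$ and $[(\id\ot\Delta)(F)](1\ot F)$ of $\otexp B3$ coincide; applying the same operator $\mu_A^{(3)}$ to the action of each on $a\ot b\ot c$ therefore yields the same element of $A$, which is associativity. For unitality, I would compute $a * 1_A = \mu_A(F(a\ot 1_A))$; writing $F = \sum F'\ot F''$ in Sweedler-type notation and using the $B$-module algebra condition $b\cdot 1_A = \epsilon(b)1_A$, the second tensor slot gives $F''\cdot 1_A = \epsilon(F'')1_A$, so $a*1_A = \mu_A\big(\sum F'a \ot \epsilon(F'')1_A\big) = \big(\sum\epsilon(F'')F'\big)a = \big((\id\ot\epsilon)(F)\big)a = 1_B\cdot a = a$ by~(\ref{d2}) and the fact that $1_B$ acts as the identity on the $B$-module $A$. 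The computation $1_A * a = a$ is entirely analogous, using the other half of~(\ref{d2}).

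The main obstacle — really the only subtle point, since everything else is formal bookkeeping — is justifying rigorously the manipulation in which the action of a product in $\otexp B3$ is disassembled into successive actions, and correctly tracking the order of operations: the "inner" factor ($F\ot 1$, respectively $1\ot F$) must act \emph{before} $\mu_A$ is applied to the first two slots, and the "outer" factor must act on the partially-multiplied element. This requires a clean statement that for a $B$-module algebra $A$, the $n$-fold iterated multiplication $\mu_A^{(n)} : \otexp An \to A$ intertwines the $\otexp Bn$-action on $\otexp An$ (via the iterated coproduct) with the $B$-action on $A$ — equivalently, that $\mu_A^{(n)}$ is a map of $B$-modules — together with the compatibility of the tensor-product module structure with multiplication in $B$, i.e.\ $b_1(b_2 x) = (b_1 b_2)x$ holds slotwise. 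Once this lemma is in place, the proof reduces to comparing the two sides of~(\ref{d1}) and~(\ref{d2}), and it is worth remarking that associativity of $\mu_A$ is used to make $\mu_A^{(3)}$ well-defined so that both bracketings of $*$ are measured against the same operator.
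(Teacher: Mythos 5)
Your proof is correct, but it takes a different route from the one the paper intends. You give the direct computational verification: using the $B$-linearity of $\mu_A$ to pull the action of $F$ through the inner multiplication, you identify $(a*b)*c$ and $a*(b*c)$ with $\mu_A^{(3)}$ applied to the actions of the two sides of~(\ref{d1}) on $a\ot b\ot c$, and you get unitality from~(\ref{d2}) together with $b\cdot 1_A=\epsilon(b)1_A$; the intertwining lemma you flag as the one subtle point is exactly right and does hold. The paper instead proves the theorem by factoring it through the operadic diagram~(\ref{eq:16}): a twisting element is an operad morphism $\Phi:\UAss\to\bbB$ (Proposition~\ref{sec:an-explanation-4}), a $B$-module algebra is an operad morphism $\alpha:\UAss\to\bEnd_A$ (Proposition~\ref{uz_treti_den_nejde_internet}), and $\pi:\bbB\ot\bEnd_A\to\End_A$ is an operad morphism (Proposition~\ref{pi}); the composite $\rho=\pi\circ(\Phi\ot\alpha)\circ\nabla:\UAss\to\End_A$ is then automatically a unital associative structure, with no equation left to check. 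Your computation is precisely what one obtains by evaluating $\rho$ on the generators $m$ and $1$ of $\UAss$ and their relations~(\ref{eq:13}), so the two arguments have the same content; yours is self-contained and elementary, while the paper's buys the immediate generalization to algebras over arbitrary (non-$\Sigma$) operads and to diagrams (Proposition~\ref{sec:generalizations} and the final section), where a slot-by-slot computation like yours would have to be redone for each operation and relation.
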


\section{An explanation}
\label{sec:an-explanation-5}

Theorem~\ref{sec:recoll-class-results-2} of Section
\ref{sec:recoll-class-results-1} has a neat interpretation provided by
diagram~(\ref{eq:16}) at the end of this section. Let us define
objects used in this diagram.

For a bialgebra $B =(B,\cdot  ,\Delta,1,\epsilon)$ as before, 
consider the collection
$\bbB = \{\bbB(n)\}_{n \geq 0}$ with
$\bbB(n) := B^{\ot n}$  equipped with $\circ_i$-operations
\[
\circ_i : \bbB(m) \ot \bbB(n) \to \bbB(m+n-1),\ m,n \geq 0,\ 1 \leq i
\leq m,
\]
given, for $\Rada u1m,\Rada v1n \in B$, by
\begin{eqnarray*}
\lefteqn{
(u_1 \ot \cdots \ot u_m)\circ_i (v_1 \ot \cdots \ot v_n)}
\\
&:=&
u_1 \ot \cdots \ot u_{i-1} \ot \Delta^{n-1}(u_i)(v_1 \ot \cdots \ot v_n) \ot
u_{i+1} \ot \cdots \ot u_m,
\end{eqnarray*}
where
\[
\Delta^{n-1} := (\Delta \ot \id_B^{\ot n-2})(\Delta \ot \id_B^{\ot n-3})
\cdots \Delta : B \to B^{\ot n}
\]
is the coproduct of $B$ iterated $(n-1)$-times, with the convention
that $\Delta^0 := \id_B$ and $\Delta^{-1} := \epsilon : B \to \otexp
B0 =: \bfk$. The operad unit $\id \in \bbB(1) = B$ is the unit $1
\in B$ and  
the  symmetric group $\Sigma_n$ acts on $\bbB(n)$ by
permuting the factors.

In the following proposition we refer to the definition of operads
based on $\circ_i$-operations given for instance in
\cite[Definition~11]{markl:handbook}, for the terminology see Appendix
\ref{sec:operad-terminology}.

\begin{proposition} 
\label{bigB}
The object $\bbB = \{\bbB(n)\}_{n \geq 0}$ described above is a
non-$\Sigma$ operad with unit $1 \in B = \bbB(1)$.
If $B$ is cocommutative, then $\bbB$ is a $\Sigma$-operad.
If $B$ does not have a counit, the above construction leads to $\bbB$
without the $\bbB(0)$-part. 
\end{proposition}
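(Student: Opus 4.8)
The plan is to verify directly the defining axioms of a non-$\Sigma$ operad in the $\circ_i$-presentation (as in \cite[Definition~11]{markl:handbook}): the two associativity relations for the $\circ_i$-operations, the left and right unit relations, and — only for the cocommutative case — the $\Sigma_n$-equivariance relations. Everything rests on three elementary facts about the iterated coproduct. First, a \emph{higher coassociativity}: for $1\le i\le m$ one has $(\id_B^{\ot i-1}\ot\Delta^{n-1}\ot\id_B^{\ot m-i})\circ\Delta^{m-1}=\Delta^{m+n-2}$, and more generally $(\Delta^{k_1-1}\ot\cdots\ot\Delta^{k_m-1})\circ\Delta^{m-1}=\Delta^{k_1+\cdots+k_m-1}$; this follows from coassociativity of $\Delta$ by an easy induction, the boundary cases being absorbed into the conventions $\Delta^0=\id_B$ and $\Delta^{-1}=\epsilon$ together with the counit axiom. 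Second, a \emph{higher counitality}: $(\id_B^{\ot i-1}\ot\epsilon\ot\id_B^{\ot m-i})\circ\Delta^{m-1}=\Delta^{m-2}$, again an induction from the counit axiom. Third, since $B$ is a bialgebra, each $\Delta^{n-1}:B\to B^{\ot n}$ is a \emph{unital algebra homomorphism}; in particular $\Delta^{n-1}(1)=\otexp{1}{n}$ and $\Delta^{n-1}(uu')=\Delta^{n-1}(u)\,\Delta^{n-1}(u')$, which is exactly what makes the assignment $u\mapsto\big(\text{multiplication by }\Delta^{n-1}(u)\big)$ a well-defined left $B$-module structure on $B^{\ot n}$ and hence makes the formula for $\circ_i$ meaningful.

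With these in hand the verification is mechanical. The right unit relation $x\circ_i 1=x$ holds because $\Delta^0=\id_B$ and $1$ acts as the identity of $B$; the left unit relation $1\circ_1 x=x$ holds because $\Delta^{n-1}(1)=\otexp{1}{n}$ acts as the identity of $B^{\ot n}$. For the \emph{disjoint} associativity relation with $1\le i<k\le m$, the two insertions touch the distinct tensor factors $u_i$ and $u_k$ of $x$ and the module actions at distinct slots are independent, so the two sides agree after the standard index relabelling, with no further input. For the \emph{nested} relation $(x\circ_i y)\circ_{i+j-1}z=x\circ_i(y\circ_j z)$ one expands both sides in Sweedler notation: on the left the $i$-th factor of $x$ becomes $\Delta^{n-1}(u_i)$ acting on the $y$-block, and the $j$-th factor of that block is then further expanded by $\Delta^{p-1}$; distributing the action of $\Delta^{n-1}(u_i)$ over its tensor factors, using the multiplicativity of $\Delta^{p-1}$ to split $\Delta^{p-1}(u_i^{(j)}v_j)$, and applying the higher coassociativity to the $j$-th factor of $\Delta^{n-1}(u_i)$, one reassembles precisely the right-hand side. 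The arity-$0$ cases are the same computations read through $\Delta^{-1}=\epsilon$, and the additional unit/counit relations involving $\bbB(0)$ reduce to the counit axiom of $B$ (via the higher counitality above); if $B$ has no counit there is no $\Delta^{-1}$, hence no $\bbB(0)$, but none of the arguments above for $n\ge 1$ ever invoked $\epsilon$, so the truncated collection $\{\bbB(n)\}_{n\ge 1}$ is still an operad.

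Finally, the $\Sigma_n$-action permuting the tensor factors is clearly a group action, and the block-permutation equivariance relations for the $\circ_i$-operations are the usual permutation bookkeeping, verbatim as for the endomorphism operad, \emph{except} at one point: commuting a permutation of the $y$-block past the inserted element $\Delta^{n-1}(u_i)(v_1\ot\cdots\ot v_n)$ requires $\Delta^{n-1}(u_i)$ to be invariant under that permutation of its $n$ factors. This invariance holds for all of $\Sigma_n$ exactly when $\Delta$ is cocommutative: cocommutativity gives the $\Sigma_2$-invariance of $\Delta$, and the higher coassociativity propagates it, by induction on $n$, to $\Sigma_n$-invariance of $\Delta^{n-1}$. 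This is the sole place cocommutativity is used, and it yields the middle assertion.

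I expect the main obstacle to be purely bookkeeping: keeping the index shifts in the nested associativity relation straight and stating the higher-coassociativity lemma in a form uniform enough to cover the $n\ge 1$ and $n=0$ cases simultaneously. The cocommutative step is the conceptually delicate one, since one must invoke the \emph{full} $\Sigma_n$-invariance of $\Delta^{n-1}$ rather than the literal $\Sigma_2$-statement of cocommutativity, but it too is settled by the short induction indicated above.
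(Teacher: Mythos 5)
Your proposal is correct and follows essentially the same route as the paper's proof in Appendix~\ref{Jaruska_dnes_letela_do_Prahy}: the unit axioms are immediate, the disjoint associativity cases need nothing, the nested case reduces to the higher coassociativity identity $(\id^{\ot i-1}\ot\Delta^{c-1}\ot\id^{\ot b-i})\Delta^{b-1}=\Delta^{b+c-2}$, and equivariance forces $\Sigma_n$-invariance of $\Delta^{n-1}$, i.e.\ cocommutativity. You are somewhat more explicit than the paper in isolating the multiplicativity of $\Delta^{n-1}$ (the bialgebra compatibility) as the ingredient that lets one split $\Delta^{c-1}$ of a product in the nested case, which is a point worth recording but does not change the argument.
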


Proposition \ref{bigB} is proved in
Appendix~\ref{Jaruska_dnes_letela_do_Prahy}.  We saw that each
bialgebra determines an operad.  The following proposition describes
operads emerging in this way.

\begin{proposition}
\label{sec:an-explanation-1}
There is a one-to-one correspondence $B \leftrightarrow \calP(1)$ 
between bialgebras $B$ and non-$\Sigma$ operads
$\calP$ such that
$\calP(n) \cong \otexp{\calP(1)}n$ for each $n \geq 0$ and that,
under this identification,
\begin{equation}
\label{eq:15}
\otexp 1n \circ_i b = \otexp 1{i-1} \ot b \ot \otexp 1{n-i} \ \mbox
       {for each} \ b \in \calP(1),\ 1 \leq i \leq n.
\end{equation}
It restricts to a correspondence between
cocommutative bialgebras and $\Sigma$-operads with the above
property. There is a similar correspondence between bialgebras without
counit and operads $\calP$ without the $\calP(0)$-part.
\end{proposition}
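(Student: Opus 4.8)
The plan is to construct the two directions of the correspondence and check they are mutually inverse. Given a bialgebra $B$, Proposition~\ref{bigB} already produces a non-$\Sigma$ operad $\bbB$ with $\bbB(n) = \otexp Bn$, and formula~(\ref{eq:15}) for it is an immediate special case of the definition of $\circ_i$ on $\bbB$: when the first argument is $\otexp 1n$, each $u_j = 1$, so $\Delta^{n-1}(u_i) = \Delta^{n-1}(1) = \otexp 1n$ (since $\Delta$ is unital), and the $\circ_i$-formula collapses to inserting $b = v_1$ (with $n=1$) in the $i$-th slot. Conversely, given a non-$\Sigma$ operad $\calP$ with $\calP(n) \cong \otexp{\calP(1)}n$ satisfying~(\ref{eq:15}), I would set $B := \calP(1)$, define the multiplication on $B$ by $u \cdot v := u \circ_1 v$ (both in $\calP(1)$), the unit to be the operad unit $\id \in \calP(1)$, the comultiplication $\Delta := \gamma \colon \calP(1) \to \calP(2) \cong \otexp{\calP(1)}2$ where $\gamma$ is $\circ_1$-composition with $\otexp 12$, i.e.\ $\Delta(b) := \otexp 12 \circ_1 b$ wait --- more precisely $\Delta$ is induced by the operad structure map $\calP(2) \otimes \calP(1) \to \calP(2)$ or rather by viewing the comultiplication as ``the image of $b$ under the canonical map $\calP(1)\to\calP(2)$''; and the counit $\epsilon := \calP(0)$-valued structure map $\calP(1) \to \calP(0) \cong \bfk$, namely $\epsilon(b) := $ the scalar corresponding to $\id_0 \circ_1 b$ where $\id_0$ is the generator of $\calP(0) \cong \bfk$.

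Next I would verify the bialgebra axioms from the operad axioms. Associativity of $\cdot$ is the associativity axiom of $\circ_1$-composition $\calP(1)\otimes\calP(1)\otimes\calP(1)\to\calP(1)$. The unit axiom for $1 := \id$ is the operad unit axiom. Coassociativity of $\Delta$ and counitality follow from the operad axioms governing iterated insertions into $\otexp 1n$ together with~(\ref{eq:15}): the key point is that~(\ref{eq:15}) forces the iterated structure maps to be exactly the iterated coproducts $\Delta^{n-1}$, so that $\calP(n) \cong \otexp Bn$ as a ``$\Delta$-module'' in the sense of the $\bbB$-construction. The compatibility of $\Delta$ with $\cdot$ (the bialgebra axiom $\Delta(uv) = \Delta(u)\Delta(v)$) and compatibility of $\epsilon$ come from the interchange (parallel/sequential composition) axiom of the operad applied to $\circ_1$ and $\circ_2$ in low arities. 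I would organize these as a short sequence of diagram identities, each of which is a direct instantiation of one operad axiom.

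Finally I would check that the two assignments are inverse to each other. Starting from $B$, forming $\bbB$, and reading off the bialgebra structure on $\bbB(1) = B$ recovers exactly $(B,\cdot,\Delta,1,\epsilon)$ --- this is a line-by-line unravelling of the $\circ_i$-formulas in arities $\le 2$. Starting from $\calP$ and forming $\bbB$ from $B = \calP(1)$, I get an operad with $\bbB(n) = \otexp Bn \cong \calP(n)$; the isomorphisms are compatible with $\circ_i$ precisely because~(\ref{eq:15}) pins down the $\circ_i$ of $\calP$ on all of $\calP(n)$ once one knows it on the sub``set'' $\otexp 1n$ and knows the $\calP(1)$-structure --- any $\circ_i$ in $\calP$ can be rebuilt from $\circ_1,\circ_2$ in $\calP(1)$ and insertions into units via the operad axioms, and the same rebuilding recipe produces the $\bbB$-formula. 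For the cocommutative/$\Sigma$ version, cocommutativity of $\Delta = \Delta^1$ is exactly $\Sigma_2$-equivariance of $\circ_1 \colon \calP(2)\otimes\calP(1)\to\calP(2)$ restricted appropriately, and conversely; the higher equivariance then follows as in Proposition~\ref{bigB}. The counit-free case is the same argument with all $\calP(0)$- and $\epsilon$-clauses deleted.

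I expect the main obstacle to be the bookkeeping in showing that~(\ref{eq:15}) genuinely rigidifies the whole operad structure --- i.e.\ that an operad $\calP$ with $\calP(n)\cong\otexp{\calP(1)}n$ satisfying~(\ref{eq:15}) has \emph{no freedom left} in its $\circ_i$-operations beyond the data $(\cdot,\Delta,1,\epsilon)$ on $\calP(1)$. Concretely, one must prove that for general arguments $u_1\otimes\cdots\otimes u_m$ and $v_1\otimes\cdots\otimes v_n$ the composite $(u_1\otimes\cdots\otimes u_m)\circ_i(v_1\otimes\cdots\otimes v_n)$ is forced to equal the $\bbB$-formula; the trick is to write $u_1\otimes\cdots\otimes u_m = (\otexp 1m)$ with the $u_j$ inserted one at a time via~(\ref{eq:15}) (and similarly for the $v$'s), then push all insertions past each other using the associativity and equivariance axioms until only $\circ_1,\circ_2$ in arity one and one copy of $\Delta^{n-1}$ survive. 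This is routine but needs to be done carefully; everything else is a direct translation dictionary between operad axioms and bialgebra axioms.
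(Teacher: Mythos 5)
Your overall strategy is the same as the paper's: the forward direction is the operad $\bbB$ of Proposition~\ref{bigB}, and the inverse reads the bialgebra structure off the low-arity operad data of $\calP$. Your insistence on checking that~(\ref{eq:15}) rigidifies \emph{all} of the $\circ_i$-operations (by writing $u_1\ot\cdots\ot u_m$ as iterated insertions into $\otexp 1m$ and pushing compositions past each other) is in fact more careful than the paper, which dismisses this as obvious.

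However, there is a concrete error in your inverse construction: you have the direction of composition backwards for both the comultiplication and the counit, and your hedging never resolves to the correct formula. You propose $\Delta(b):=\otexp 12\circ_1 b$, but by hypothesis~(\ref{eq:15}) this equals $b\ot 1$, which is certainly not the comultiplication one must recover (check it against $\calP=\bbB$, where $(1\ot 1)\circ_1 b=\Delta^0(1)b\ot 1=b\ot 1$ while $b\circ_1(1\ot 1)=\Delta(b)(1\ot 1)=\Delta(b)$). The correct definition, used in the paper, is $\Delta(b):=b\circ_1(\id\ot\id)$, i.e.\ the structure map $\calP(1)\ot\calP(2)\to\calP(2)$ with $b$ as the \emph{outer} element; your alternatives ($\calP(2)\ot\calP(1)\to\calP(2)$, or ``the canonical map $\calP(1)\to\calP(2)$'') either point the wrong way again or are too vague to pin down the datum. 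The same slip occurs for the counit: $\id_0\circ_1 b$ is not even defined (an arity-$0$ element has no inputs); the counit is $\epsilon(b):=b\circ_1\id_0$ via $\calP(1)\ot\calP(0)\to\calP(0)\cong\bfk$. Since $\Delta$ and $\epsilon$ are precisely the data that the inverse construction must produce, this is a genuine gap: with your explicit formulas the composite $B\mapsto\bbB\mapsto\bbB(1)$ would return the trivial coalgebra structure $b\mapsto b\ot 1$ rather than the original $\Delta$, and the two assignments would fail to be mutually inverse. The rest of your verification plan (associativity of $\cdot$ from operad associativity of $\circ_1$, coassociativity and the compatibility $\Delta(uv)=\Delta(u)\Delta(v)$ from the operad axioms in low arity, the $\Sigma$/cocommutative and counit-free variants) is sound once the formulas are corrected.
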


In~(\ref{eq:15}), $1 \in \calP(1)$ is the operadic unit, so
$\otexp 1n \in \otexp{\calP(1)}n \cong \calP(n)$, and also the right
hand side of~(\ref{eq:15}) is interpreted as an element of $\calP(n)$.
We prove Proposition~\ref{sec:an-explanation-1} in Appendix
\ref{Jaruska_dnes_letela_do_Prahy}.

In the following proposition, $\UAss = \big\{\UAss(n)\big\}_{n \geq 0}$
is the non-$\Sigma$ operad for unital associative
algebras. It is given by requiring
that $\UAss(n) :=\bfk$ for each $n \geq 0$ and that all $\circ_i :
\bfk \ot \bfk \to \bfk$ are the canonical isomorphisms $\bfk
\ot \bfk \cong \bfk$. 

\begin{subequations}
\begin{proposition}
\label{sec:an-explanation-4}
A twisting element of Definition
\ref{sec:recoll-class-results} is the same as an operad morphism
\begin{equation}
\label{eq:1}
\Phi : \UAss \to \bbB
\end{equation}
such that
\begin{equation}
\label{eq:28}
\Phi : \bfk = \UAss(0) \to \bbB(0) = \bfk \mbox { is the identity.}%
\footnote{As explained in Remark~\ref{fn:1} below, this condition can be omitted.}
\end{equation}
\end{proposition}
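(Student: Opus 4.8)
The plan is to unwind both sides of the claimed correspondence explicitly. An operad morphism $\Phi: \UAss \to \bbB$ is, arity by arity, a collection of $\bfk$-linear maps $\Phi_n : \UAss(n) = \bfk \to \bbB(n) = B^{\ot n}$. Since $\UAss(n) = \bfk$ is one-dimensional, $\Phi_n$ is determined by the single element $F_n := \Phi_n(1) \in B^{\ot n}$, where $1 \in \bfk$ is the generator. The operadic unit of $\UAss$ lives in $\UAss(1) = \bfk$ and must go to the operadic unit $1 \in B = \bbB(1)$, so $F_1 = 1$. I will then set $F := F_2 \in B^{\ot 2} = B \otred B$ and show that the data of $\Phi$ is equivalent to the pair $(F, \text{condition on } F_0)$, with $F$ a twisting element precisely when $\Phi$ is compatible with all $\circ_i$-operations.

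First I would observe that, because in $\UAss$ every element $1 \in \UAss(n)$ can be written as an iterated $\circ_i$-composition of the generators $1 \in \UAss(2)$ and $1 \in \UAss(1)$ (associativity of $\UAss$ makes any such parenthesization give the same element), the morphism $\Phi$ is completely determined by $F_1 = 1$, $F_0 = \Phi_0(1)$, and $F = F_2$; the higher $F_n$ are forced. Concretely, $F_3 = \Phi_3(1)$ must equal both $F_2 \circ_1 F_2$ and $F_2 \circ_2 F_2$ in $\bbB(3)$, and by the definition of $\circ_i$ in $\bbB$ these read $[(\Delta \ot \id)(F)](F \ot 1)$ and $[(\id \ot \Delta)(F)](1 \ot F)$ respectively — this is exactly equation~(\ref{d1}). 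Similarly, the $\circ_i$-relations involving the unit $1 \in \UAss(1)$, namely $1 \circ_1 1_{\UAss(2)} = 1_{\UAss(2)} = 1_{\UAss(2)} \circ_i 1$ in the two slots, together with the unitality $F_1 = 1$, become tautologies (they just re-express $F = F$), so they impose nothing new; the genuinely new unit-type constraint comes from the arity-zero part. The $\circ_i$-relations with $F_0$, i.e.\ $1_{\UAss(2)} \circ_1 F_0$ and $1_{\UAss(2)} \circ_2 F_0$ landing in $\bbB(1) = B$, unwind via $\Delta^{-1} = \epsilon$ to $(\epsilon \ot \id)(F) \cdot F_0$ and $(\id \ot \epsilon)(F) \cdot F_0$; requiring these to equal $F_1 = 1$, given the normalization~(\ref{eq:28}) that $F_0 = \Phi_0(1)$ is the unit $1 \in \bbB(0) = \bfk \cong \bfk$ mapping to $1 \in B$, yields exactly~(\ref{d2}).

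Conversely, given a twisting element $F$ satisfying~(\ref{d1}) and~(\ref{d2}), I would define $\Phi_0 := \id$, $\Phi_1(1) := 1$, $\Phi_2(1) := F$, and $\Phi_n(1) := F_n$ by any iterated $\circ_i$-composition of copies of $F$ (well-defined because~(\ref{d1}) is precisely the coherence needed for all parenthesizations of a given arity to agree, and~(\ref{d2}) handles insertions of $F_0$); then check that the resulting $\Phi$ is a bona fide operad morphism, which reduces to the already-verified arity-$\leq 3$ relations by an induction on arity using associativity of the $\circ_i$-operations in $\bbB$ (Proposition~\ref{bigB}). The main obstacle I anticipate is the bookkeeping in this last verification: one must confirm that $F_n$ really is independent of the chosen parenthesization/insertion order, which is an associativity/coherence argument of the same flavor as the proof that a monoid is determined by its binary product. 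I expect this to follow formally from the operad axioms for $\bbB$ plus~(\ref{d1})–(\ref{d2}) — indeed it is the operadic reformulation of the classical fact that~(\ref{d1}) alone guarantees associativity of the twisted product in Theorem~\ref{sec:recoll-class-results-2} — but it is the step requiring genuine care rather than routine symbol-pushing.
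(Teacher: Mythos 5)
Your proposal is correct and follows essentially the same route as the paper: identify the morphism with the images $F=\Phi(m)\in\bbB(2)$ and $a=\Phi(1)\in\bbB(0)$ of the generators, translate $m\circ_1 m=m\circ_2 m$ into~(\ref{d1}) and the unit relations into $a(\epsilon\ot\id)(F)=1=a(\id\ot\epsilon)(F)$, which becomes~(\ref{d2}) under the normalization~(\ref{eq:28}). The only difference is that the paper simply cites the presentation of $\UAss$ by the generators $m\in\UAss(2)$, $1\in\UAss(0)$ and the relations~(\ref{eq:13}), which is exactly the coherence/well-definedness step you sketch by hand in your last paragraph.
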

\end{subequations}

\begin{proof}
The operad $\UAss$ is generated by $m \in \UAss(2)$ and $1 \in
\UAss(0)$ subject to the relations
\begin{equation}
\label{eq:13}
m \circ_1 m  = m\circ_2 m\  \mbox { and } \
m \circ_1 1 =m \circ_2 1 = \id,  
\end{equation}
where $\id \in \UAss(1)$ denotes the operadic unit.  A
morphisms~(\ref{eq:1}) is therefore specified by
\begin{align}
\label{eq:29}
F := \Phi(m) \in \bbB(2) = B \otred B\ 
\mbox { and }  \
a := \Phi(1) \in \bbB(0) = \bfk
\end{align}
provided the induced map is compatible with the
relations~(\ref{eq:13}), meaning that 
\begin{subequations}
\begin{align}
\label{eq:25}
F\circ_1 F & = F\circ_2 F,\  \mbox { and }
\\
\label{eq:26}
F \circ_1 a & =F \circ_2 a = \id. 
\end{align}
\end{subequations}
It follows from the definition of $\bbB$ that~(\ref{eq:25})
is precisely~(\ref{d1})  while~(\ref{eq:26}) leads to
\begin{equation}
\label{eq:27}
a(\epsilon \ot \id)(F) = 1  = a(\id \ot \epsilon)(F).
\end{equation}         
We finish by noticing that~(\ref{eq:28}) means $a =
1$ when~(\ref{eq:27}) becomes~(\ref{d2}) of 
Definition~\ref{sec:recoll-class-results}. 
\end{proof}

\begin{remark}
\label{fn:1}
Given a morphism $\Phi : \UAss \to \bbB$,
let $F$ and $a$ be as in~(\ref{eq:29}). 
Clearly $a \not=0$ by~(\ref{eq:27}). We may therefore always replace
such a morphism by a rescaled morphism $\Phi'$ defined by $\Phi'(m)
:= \frac 1a \Phi(m)$ and $\Phi'(1) := 1$ which
satisfies~(\ref{eq:28}).
In terms of twistings, general morphisms  $\Phi : \UAss \to \bbB$ twist
the multiplication by~(\ref{eq:5}), but also replace the unit $1 \in A$
by $a\cdot 1$. Equation~(\ref{d2}) or
the equivalent~(\ref{eq:28}) demands that $1 \in A$ is unchanged. 

It is therefore {\em not necessary\/} to assume ~(\ref{eq:28}), as it
always can be achieved by rescaling. Twisting elements will then
precisely be morphisms $\Phi : \UAss \to \bbB$.
\end{remark}

\begin{example}
\label{sec:an-explanation-3}
One always has the trivial twisting element $1\otred 1 \in B\otred B$
that corresponds to the trivial morphism $\tr : \UAss \to \bbB$
sending $1 \in \bfk = \UAss(n)$ to $\otexp 1n \in \otexp Bn = \bbB(n)$
for each $n \geq 0$. It is indeed trivial, but we will need
the map $\tr$ later for
Proposition~\ref{Jarunka_vcera_odletela_do_Prahy}. 
\end{example}

\begin{example}
\label{sec:an-explanation}
Let $B = \bfk[p]$ be the ring of polynomials in $p$, with the standard
bialgebra structure given by the pointwise multiplication of polynomials and
$\Delta(p) := p \otred 1 + 1 \otred p$. Then $\bbB(n) = \bfk[p]^{\ot n}$ is clearly
isomorphic to $\bfk[\Rada u1n]$, via the isomorphism
\[
\bfk[p]^{\ot n} \ni f_1(p) \ot \cdots \ot f_n(p) \longleftrightarrow
f_1(u_1)\cdots f_n(u_n) \in \bfk[\Rada u1n].
\]
With this identification, the diagonal
$\Delta : \bfk[u_1] \to \bfk[u_1,u_2]$ is given by
$\Delta f(u_1) = f(u_1 + u_2)$ and
a twisting element in $B$ is a polynomial $F(u_1,u_2) \in
\bfk[u_1,u_2]$ satisfying the functional equation
\[
F(u_1 + u_2,u_3)F(u_1,u_2) = F(u_1,u_2 + u_3)F(u_2,u_3)
\]
with the `boundary condition' $F(0,u) = F(u,0) = 1$.  This description
can be easily generalized to polynomial algebras in several variables;
we leave the details as an exercise.
\end{example}

Suppose that $M$ is a left $B$-module.
Consider the subcollection $\bEnd_M$  of the endomorphism operad
$\End_M$ consisting of $B$-linear maps
$\phi : M^{\ot n} \to M$, i.e.~the ones satisfying
\[
b\phi(m_1 \ot \cdots \ot m_n)
= \phi\big(\Delta^{n-1}(b)(m_1 \ot \cdots \ot m_n)\big).
\]
for each $b \in B$ and $m_1 \ot \cdots \ot m_n \in M$. 
Notice that the $B$-linearity of a map $\otexp M0 = \bfk \to M$ i.e.~of
an element $m\in M$ means that $\epsilon(b) \cdot m = b
\cdot m$, due to our convention $\Delta^{-1} = \epsilon$.
In Appendix \ref{Jaruska_dnes_letela_do_Prahy} we verify:

\begin{lemma}
\label{sec:an-explanation-2}
The collection $\bEnd_M$ is a non-$\Sigma$ suboperad of the operad
$\End_M$. It is a $\Sigma$-suboperad if $B$ is cocommutative.
\end{lemma}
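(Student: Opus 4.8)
The plan is to show that $\bEnd_M$ is closed under the operadic structure operations of $\End_M$ — namely the $\circ_i$-compositions, the symmetric group action (in the cocommutative case), and that it contains the operadic unit — since a subcollection of an operad that is closed under all structure operations is automatically a suboperad.

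First I would record the trivial observations: the identity map $\id_M \in \End_M(1)$ is $B$-linear by definition (the condition for $n=1$ is just $b\phi(m) = \phi(\Delta^0(b)(m)) = \phi(bm)$, which $\id_M$ satisfies since $\Delta^0 = \id_B$), so $\bEnd_M(1)$ contains the unit. Next, the main step: given $B$-linear maps $\phi \in \bEnd_M(m)$ and $\psi \in \bEnd_M(n)$, I must check that $\phi \circ_i \psi : M^{\ot(m+n-1)} \to M$ is again $B$-linear. Writing out $(\phi \circ_i \psi)(m_1 \ot \cdots \ot m_{m+n-1}) = \phi(m_1 \ot \cdots \ot \psi(m_i \ot \cdots \ot m_{i+n-1}) \ot \cdots)$, I apply $b$ on the left, use the $B$-linearity of $\phi$ to rewrite $b\phi(\cdots)$ as $\phi(\Delta^{m-1}(b)(\cdots))$, and then inside the $i$-th slot I need to move the relevant Sweedler component of $\Delta^{m-1}(b)$ through $\psi$ using the $B$-linearity of $\psi$. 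The bookkeeping that makes this work is the \emph{coassociativity identity} for the iterated coproducts, namely that inserting $\Delta^{n-1}$ into the $i$-th factor of $\Delta^{m-1}$ reproduces $\Delta^{m+n-2}$ up to the evident reindexing — this is precisely the identity that was already invoked implicitly in defining the operad $\bbB$ in Section~\ref{sec:an-explanation-5} and can be cited as a standard property of bialgebras. I would also note the degenerate case $n=0$: here $\psi \in \bEnd_M(0)$ is $B$-linear in the sense $b\psi = \epsilon(b)\psi$ (using the convention $\Delta^{-1} = \epsilon$), and the composition $\phi \circ_i \psi$ lands in $\bEnd_M(m-1)$; the verification goes through with $\Delta^{m-1}$ having its $i$-th tensor slot hit by $\epsilon$, which by counitality collapses $\Delta^{m-1}$ to $\Delta^{m-2}$, exactly as needed.

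For the $\Sigma$-structure, suppose $B$ is cocommutative. Given $\phi \in \bEnd_M(n)$ and $\sigma \in \Sigma_n$, the map $\phi^\sigma(m_1 \ot \cdots \ot m_n) := \phi(m_{\sigma(1)} \ot \cdots \ot m_{\sigma(n)})$ must be shown $B$-linear. Applying $b$ and using $B$-linearity of $\phi$ gives $\phi(\Delta^{n-1}(b)(m_{\sigma(1)} \ot \cdots \ot m_{\sigma(n)}))$; I then need this to equal $\phi^\sigma(\Delta^{n-1}(b)(m_1 \ot \cdots \ot m_n))$, i.e.\ that permuting the inputs and then distributing $\Delta^{n-1}(b)$ agrees with distributing $\Delta^{n-1}(b)$ and then permuting. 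This is exactly the statement that $\Delta^{n-1}$ is $\Sigma_n$-equivariant with respect to the action permuting tensor factors of $B$, which follows from cocommutativity of $\Delta$ (cocommutativity implies all higher $\Delta^{n-1}$ are symmetric under $\Sigma_n$ by an induction on $n$, again a standard fact).

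The main obstacle — really the only nontrivial point — is the coassociativity bookkeeping in the $\circ_i$ step: keeping the Sweedler indices straight while distributing $\Delta^{m-1}(b)$ over the $m$ tensor slots, one of which is itself the output of $\psi$ fed with $n$ inputs, and matching this against $\Delta^{m+n-2}(b)$ distributed over all $m+n-1$ inputs. Since this is precisely the identity underlying the well-definedness of the operad $\bbB$ already established in Proposition~\ref{bigB}, I would present it in that language and keep the computation brief, relegating the fully expanded Sweedler calculation to Appendix~\ref{Jaruska_dnes_letela_do_Prahy} as the excerpt indicates. Everything else is immediate from the definitions.
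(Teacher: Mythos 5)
Your proposal is correct and follows essentially the same route as the paper: direct verification that $\bEnd_M$ is closed under the operadic structure maps, with the $\circ_i$-closure resting on the coassociativity identity already used for $\bbB$, and the $\Sigma$-closure resting on the cocommutativity of the iterated coproduct $\Delta^{n-1}$, exactly as in the paper's appendix. You merely spell out in more detail the composition step (including the arity-$0$ case) that the paper dismisses as ``simple to prove.''
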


\begin{proposition}
\label{uz_treti_den_nejde_internet}
A left $B$-module algebra $A = (A,\cdot,1_A)$ is the same as a left
$B$-module $A$ together with an operad morphism
$\alpha : \UAss \to \bEnd_A$.
\end{proposition}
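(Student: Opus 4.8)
The plan is to unwind both sides of the claimed bijection and check they carry the same data.

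First I would observe what an operad morphism $\alpha : \UAss \to \bEnd_A$ amounts to. Since $\UAss$ is generated by $m \in \UAss(2)$ and $1 \in \UAss(0)$ subject to the relations~(\ref{eq:13}) (associativity and the two unit axioms), such a morphism is determined by the two elements $\mu := \alpha(m) \in \bEnd_A(2)$ and $e := \alpha(1) \in \bEnd_A(0)$, subject to $\mu \circ_1 \mu = \mu \circ_2 \mu$ and $\mu \circ_1 e = \mu \circ_2 e = \id$. By definition of $\bEnd_A$ as a suboperad of $\End_A$ (Lemma~\ref{sec:an-explanation-2}), $\mu$ is a $B$-linear map $A \ot A \to A$ and $e$ is a $B$-linear map $\bfk \to A$, i.e.\ an element $1_A := e(1) \in A$ with $b \cdot 1_A = \epsilon(b)\cdot 1_A$ for all $b \in B$ — which is exactly the $B$-linearity of the unit map demanded in the definition of a $B$-module algebra. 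The operadic relations on $\mu$ and $e$ are, by the standard identification of $\End_A$-structure with associative algebras, precisely associativity of $\mu$ and the property that $1_A$ is a two-sided unit.

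So the two steps I would carry out explicitly are: (i) a morphism $\alpha$ gives the data $(A, \mu, 1_A)$ of a left $B$-module algebra, reading $B$-linearity of $\mu$ and $1_A$ off the fact that $\alpha$ lands in $\bEnd_A$ rather than merely $\End_A$, and the algebra axioms off compatibility with~(\ref{eq:13}); and (ii) conversely, any left $B$-module algebra structure on $A$ defines $\mu \in \bEnd_A(2)$ and $1_A \in \bEnd_A(0)$ satisfying those relations, hence by the universal property of the presentation a unique morphism $\alpha : \UAss \to \bEnd_A$. These two constructions are visibly mutually inverse, which gives the bijection.

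I do not expect a real obstacle here; the statement is essentially a packaging lemma. The only point requiring a little care — and the one I would state carefully rather than the routine algebra checks — is that the $B$-linearity of a map $\otexp A0 = \bfk \to A$, under the convention $\Delta^{-1} := \epsilon$, unwinds to the condition $b \cdot 1_A = \epsilon(b) \cdot 1_A$, so that the ``extra'' unit compatibility in the definition of a $B$-module algebra is not an additional hypothesis but is already encoded in the requirement that $\alpha(1)$ lie in $\bEnd_A(0)$. This was flagged in the paragraph preceding Lemma~\ref{sec:an-explanation-2}, so I would simply cite that observation and let the rest be a direct verification.
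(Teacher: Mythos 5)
Your proposal is correct and follows the same route as the paper, which simply notes that a unital associative algebra is an operad morphism $\UAss \to \End_A$ and that $B$-linearity of the structure operations is exactly the condition that this morphism factors through $\bEnd_A$. You spell out the generators-and-relations bookkeeping and the arity-$0$ point (that $B$-linearity of $\bfk \to A$ under the convention $\Delta^{-1}=\epsilon$ yields $b\cdot 1_A = \epsilon(b)\cdot 1_A$) which the paper leaves implicit, citing instead the remark preceding Lemma~\ref{sec:an-explanation-2}; this is the same argument, just written out in full.
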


\begin{proof}
The statement is evident. An associative unital algebra $(A, \cdot,
1_A)$ is the same as an operad morphism $\UAss \to \End_A$. Its structure
operations are $B$-linear if and only if the image of this morphism
belongs to $\End_A^B$.
\end{proof}

Let us consider the map of collections $\pi : \bbB \ot \bEnd_A \to
\End_A$ given, for $b_1 \ot \cdots \ot b_n \in \bbB(n)$, $\phi \in
\bEnd_A(n)$ and $a_1, \ldots,
a_n \in A$, by
\[
\pi\big((b_1 \ot \cdots \ot b_n) \ot \phi\big)(a_1\ot \cdots \ot
a_n):=
\phi(b_1 a_1 \ot \cdots\ot b_n a_n).
\]
The proof of the following proposition is a simple direct verification.

\begin{proposition} \label{pi}
The map $\pi : \bbB \ot \bEnd_A \to
\End_A$ is an operad morphism. If $B$ is commutative, the image of
$\pi$ belongs to $\bEnd_A$.
\end{proposition}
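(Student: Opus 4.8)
The plan is a direct check against the definitions, with no conceptual surprises. First I would unwind the operad structure on the arity-wise tensor product $\bbB \ot \bEnd_A$: its $n$-th space is $\bbB(n) \ot \bEnd_A(n) = \otexp Bn \ot \bEnd_A(n)$, its $\circ_i$ is performed slot-wise (using the symmetry of $\ot$), and its operad unit is $1 \ot \id_A \in B \ot \bEnd_A(1)$. Here I use that $\bEnd_A$ is a suboperad of $\End_A$ (Lemma~\ref{sec:an-explanation-2}), so that $\phi\circ_i\psi$ may be computed in $\End_A$. The unit is then immediate, since $\pi(1 \ot \id_A)(a) = \id_A(1\cdot a) = a$. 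Equivariance in the cocommutative case is also immediate: the $\Sigma_n$-action permutes the tensor factors of $\otexp Bn$ and the inputs of $\End_A$-operations compatibly, and $\pi$ is visibly built factor by factor.

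The one step that needs care is compatibility with $\circ_i$. Write $x = u_1 \ot \cdots \ot u_m$, $y = v_1 \ot \cdots \ot v_n$, and take $\phi \in \bEnd_A(m)$, $\psi \in \bEnd_A(n)$. I would evaluate both $\pi\big((x\circ_i y)\ot(\phi\circ_i\psi)\big)$ and $\pi(x\ot\phi)\circ_i\pi(y\ot\psi)$ on a generic tensor $a_1 \ot \cdots \ot a_{m+n-1}$. For the first, the definition of $\circ_i$ in $\bbB$ places $\Delta^{n-1}(u_i)(v_1\ot\cdots\ot v_n)$ in the $i$-th slot; after letting this element of $\otexp Bn$ act on $a_i \ot \cdots \ot a_{i+n-1}$ and feeding the result into $\psi$, one gets $\psi\big(\Delta^{n-1}(u_i)(v_1 a_i \ot \cdots \ot v_n a_{i+n-1})\big)$, which by the $B$-linearity of $\psi$ equals $u_i\cdot\psi(v_1 a_i \ot \cdots \ot v_n a_{i+n-1})$; then $\phi$ is applied to this, the remaining slots carrying $u_j a_j$. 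For the second expression, $\pi(y\ot\psi)$ produces exactly $\psi(v_1 a_i \ot \cdots \ot v_n a_{i+n-1})$ in the $i$-th slot, which is then multiplied by $u_i$ when $\pi(x\ot\phi)$ is applied, and the two expressions coincide. I expect this invocation of the $B$-linearity of $\psi$ to collapse the iterated coproduct $\Delta^{n-1}(u_i)$ to be the \emph{only} place a reader must pause: it is precisely the point at which the definitions of $\bbB$ and of $\bEnd_A$ are engineered to mesh, and apart from index bookkeeping the rest is mechanical.

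For the second assertion, suppose $B$ is commutative and fix $\psi := \pi\big((b_1\ot\cdots\ot b_n)\ot\phi\big)$, so that $\psi(a_1 \ot \cdots \ot a_n) = \phi(b_1 a_1 \ot \cdots \ot b_n a_n)$; the task is to show $\psi$ is $B$-linear. For $c \in B$, the $B$-linearity of $\phi$ gives $c\cdot\psi(a_1\ot\cdots\ot a_n) = \phi\big(\Delta^{n-1}(c)(b_1 a_1 \ot \cdots \ot b_n a_n)\big)$, whereas $\psi\big(\Delta^{n-1}(c)(a_1\ot\cdots\ot a_n)\big) = \phi\big((b_1\ot\cdots\ot b_n)\,\Delta^{n-1}(c)\,(a_1\ot\cdots\ot a_n)\big)$. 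These agree because $b_1 \ot \cdots \ot b_n$ and $\Delta^{n-1}(c)$ commute in the algebra $\otexp Bn$ — which holds exactly because $B$ is commutative, and is the sole place commutativity is used. The arity-zero case reduces to the trivial statement that a scalar is a $B$-linear element of $A$ under the convention $\Delta^{-1} = \epsilon$, so the image of $\pi$ indeed lands in the suboperad $\bEnd_A$.
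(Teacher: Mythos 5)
Your verification is correct and is exactly the ``simple direct verification'' that the paper leaves to the reader: compatibility with $\circ_i$ does hinge on the $B$-linearity of $\psi$ absorbing $\Delta^{n-1}(u_i)$, and the second assertion on the commutativity of $B^{\ot n}$, just as you identify. Nothing to add.
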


\noindent 
{\bf Explanation of Theorem~\ref{sec:recoll-class-results-2}.}  A
twisting element is an operad morphism $\Phi : \UAss \to
\bbB$ by Proposition~\ref{sec:an-explanation-4}.  A left $B$-module
algebra is an operad morphism $\alpha : \UAss \to
\bEnd_A$ by Proposition~\ref{uz_treti_den_nejde_internet}.
We can therefore form the diagram
\begin{equation}
\label{eq:16}
\xymatrix{
\UAss \ar[r]^{\nabla \hskip 1.8em} \ar@/_1em/[1,2]^\rho 
&
\UAss \ot \UAss \ar[r]^{\Phi \ot \alpha}
&
\bbB \ot \bEnd_A \ar[d]^\pi 
\\
&& \End_A
}
\end{equation}
in which $\nabla : \UAss \to \hbox{$\UAss \ot \UAss$}$ is the standard
diagonal of the Hopf operad $\UAss$, see Appendix
\ref{sec:operad-terminology}.\footnote{More usual notation would be
$\Delta$ which we already reserved for the diagonal of $B$.}
The associative multiplication $a,b \mapsto a*b$ of Theorem
\ref{sec:recoll-class-results-2} is precisely the associative algebra
structure given by the composite morphism $\rho : \UAss \to \End_A$.

\section{Deformations and UDF's}
\label{sec:deformations-udfs}

The importance of twisting elements lies in the fact that they
determine deformations. We
start this section by recalling some notions of~\cite{gia-zhang} adapted to
deformations over an arbitrary local complete Noetherian ring $R =
(R,I)$ with the maximal ideal $I$ and residue field $\bfk = R/I$. For
terminology and basic definitions related to deformations over $R$
we refer e.g.~to~\cite[Chapter~1]{markl12:_defor}. 

Prominent examples of $R$ are the ring $\bfk[[t]]$ of
formal power series in variable $t$,  and the
Artin ring $D:= \bfk[t]/(t^2)$ of dual numbers. Deformations over
$\bfk[[t]]$ are called {\em formal\/}, deformations over  $D$ are {\em
  infinitesimal\/} deformations. In the sequel, $\cot$ (resp.~$\cot_R$) 
will denote the completed tensor product over $\bfk$ (resp.~over $R$).

\begin{definition}
\label{sec:formal-deformations1}
A {\em universal deformation formula\/} (UDF) over a~complete
Noetherian ring $R = (R,I)$ with residue field $\bfk$ based on a
bialgebra $B$ is a twisting element $F$ in the bialgebra $(B \cot
R)\cot_R (B \cot R) \cong (B \otred B) \cot R$ that has the form
\begin{equation}
\label{eq:2}
F = 1\otred 1 + F_\circ,
\end{equation}
with some $F_\circ \in (B\ot B) \cot I$.
\end{definition}

It is clear that a UDF leads,
via formula~(\ref{eq:5}),
to a deformation of $A$ with base $R$. Two UDF's $F',F''$ as in Definition
\ref{sec:formal-deformations1} are {\em equivalent\/} if there exists
\[
G = 1 + G_\circ \in B \cot R, 
\]
with some $G_\circ \in B \cot I$ such that
\begin{equation}
\label{eq:7}
F'' = \Delta (G) F' (G^{-1} \otred G^{-1}).
\end{equation}
It is easy to see that equivalent UDF's produce equivalent
deformations.

Let us denote by $\udf_B(R)$ the moduli space of equivalence classes of
UDF's as in Definition~\ref{sec:formal-deformations1}. Observe that
the sets $\udf_B(R)$ assemble into a functor $\udf_B(-)$ from the
category of local complete Noetherian rings to sets, i.e.~of the same
nature as the deformation functor~\cite[Chapter~1]{markl12:_defor}.  

\begin{example}
\label{dnes_prijede_Jaruska}
For $R = \bfk[[t]]$, UDF's based on $B$ are twisting elements $F \in
(B\otred B)[[t]]$ of the form
\[
F = 1\otred 1 + tF_1 + t^2F_2 + \cdots
\]
with some $F_1,F_2,\ldots \in B \otred B$. The element $G$
in~(\ref{eq:7}) in this case has the form
\[
G = 1 + tG_1 + t^2G_2 + \cdots  \in B[[t]]
\]
with $G_1,G_2,\ldots \in B$. Our definition of a UDF therefore
coincides with that of~\cite{gia-zhang}.
\end{example}

The operad structure of $\bbB$ $R$-linearly extends to an operad
structure on the collections $\bbB\cot R$.  In the following statement
which is a direct consequence of definitions and Proposition
\ref{sec:an-explanation-4}, $\id \cot \eta : \bbB\cot R \to \bbB \cot
\bfk \cong \bbB$ denotes the operad map induced by the augmentation
$\eta : R \to R/I \cong \bfk$, and $\tr : \UAss \to \bbB$ is the
trivial twisting element from
Example~\ref{sec:an-explanation-3}.

\begin{proposition}
\label{Jarunka_vcera_odletela_do_Prahy}
Universal deformation formulas over $R$ based on $B$ are the same as
operad morphisms
\[
\Phi : \UAss \to \bbB \cot R
\]
such that $(\id \cot \eta) \circ \Phi = \tr$ and that $\Phi : \bfk =
\UAss(0) \to \bbB(0) \cot R \cong R$ is the inclusion $\bfk \hookrightarrow R$
given by the unit of $R$.
\end{proposition}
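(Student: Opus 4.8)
The plan is to obtain the statement by reading Proposition~\ref{sec:an-explanation-4} $R$-linearly over the operad $\bbB \cot R$ and then matching the two extra conditions of Definition~\ref{sec:formal-deformations1} against the normalizations that occur in that proposition. Since $\UAss$ is generated by $m \in \UAss(2)$ and $1 \in \UAss(0)$ subject to the relations~(\ref{eq:13}), an operad morphism $\Phi : \UAss \to \bbB \cot R$ is determined by the pair
\[
F := \Phi(m) \in \bbB(2) \cot R = (B\otred B) \cot R, \qquad
a := \Phi(1) \in \bbB(0) \cot R \cong R,
\]
and repeating verbatim the computation in the proof of Proposition~\ref{sec:an-explanation-4}, but with every $\circ_i$ now taken in the $R$-linear operad $\bbB \cot R$, shows that such a pair comes from a (unique) morphism precisely when $F$ satisfies the twisting equation~(\ref{d1}) over $R$ and $a\,(\epsilon\cot\id)(F) = 1 = a\,(\id\cot\epsilon)(F)$ in $R$, where $\epsilon$ denotes the $R$-linear extension of the counit of $B$. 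Nothing in that computation uses that the ground ring is a field, so it transfers without change.

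Next I would unwind the two conditions in the proposition. The requirement that $\Phi$ restricted to $\UAss(0)$ be the unit inclusion $\bfk \hookrightarrow R$ says exactly $a = 1 \in R$, whereupon the relation above becomes $(\epsilon\cot\id)(F) = 1 = (\id\cot\epsilon)(F)$, i.e.\ equation~(\ref{d2}) read over $R$; thus the morphisms subject to this normalization are precisely the twisting elements $F$ of the bialgebra $(B\otred B)\cot R$ in the sense of Definition~\ref{sec:recoll-class-results}. For the remaining condition, note that $(\id\cot\eta)\circ\Phi$ and $\tr$ are both operad morphisms $\UAss \to \bbB$, hence equal if and only if they agree on the generators $m$ and $1$; on $1$ both send $1 \in \UAss(0)$ to $1 \in \bfk$ as soon as $a = 1$, and on $m$ the equality reads $(\id\cot\eta)(F) = \tr(m) = 1\otred 1$. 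Identifying the kernel of $\id\cot\eta : (B\otred B)\cot R \to B\otred B$ with $(B\otred B)\cot I$, this last equality is equivalent to $F = 1\otred 1 + F_\circ$ with $F_\circ \in (B\ot B)\cot I$, which is exactly the shape~(\ref{eq:2}). Chaining these equivalences and recalling that, by Definition~\ref{sec:formal-deformations1}, a UDF over $R$ based on $B$ is a twisting element of this form in $(B\cot R)\cot_R(B\cot R) \cong (B\otred B)\cot R$, yields the asserted bijection.

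Because the argument is essentially bookkeeping on top of Proposition~\ref{sec:an-explanation-4}, there is no genuine obstacle. The only point deserving a line of justification is the identification $\ker(\id\cot\eta) = (B\otred B)\cot I$, i.e.\ that tensoring the piece $\bbB(2) = B\otred B$ over $\bfk$ with the exact sequence $0 \to I \to R \to \bfk \to 0$ stays exact; granting this, the augmentation condition $(\id\cot\eta)\circ\Phi = \tr$ is precisely the demand that the twisting element $F$ have the normalized form~(\ref{eq:2}), and the proposition follows.
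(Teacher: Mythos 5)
Your argument is correct and is exactly the verification the paper has in mind: the paper offers no written proof, stating only that the proposition ``is a direct consequence of definitions and Proposition~\ref{sec:an-explanation-4}'', and your bookkeeping (reading that proposition $R$-linearly, extracting $a=1$ from the arity-$0$ condition, and identifying the augmentation condition with the normalized form~(\ref{eq:2}) via $\ker(\id\cot\eta)=(B\otred B)\cot I$) is precisely that direct consequence spelled out.
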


\noindent 
{\bf The logarithmic trick.}
\label{sec:deformations-1}
Since $(B \otred B) \cot I$ is a complete algebra, the formal logarithm 
$f := \log(F)$ of~(\ref{eq:2}) exists.  If $B \cot I$ and therefore
also $(B \otred B) \cot I$ is
{\em commutative\/},\footnote{This happens if and only if either $B$ is
commutative or $I^2 = 0$.} applying the logarithm to~(\ref{d1}) 
leads to
\begin{subequations}
\begin{equation}
\label{eq:3}
(\Delta \ot \id)f + f\ot 1 = (\id \ot \Delta)f + 1 \ot f.
\end{equation}
Condition~(\ref{d2}) for $F$ in~(\ref{eq:2}) is satisfied automatically.
The logarithmic version of~(\ref{eq:7}) reads
\begin{equation}
\label{eq:9}
f'' = f' + \Delta(g) - 1 \ot g - g \ot 1,
\end{equation}
\end{subequations}
with some $g \in B \cot I$.

Therefore, if $B\cot I$ is commutative, $\exp(-)$ resp.~$\log(-)$ define an
isomorphism of the set $\udf_B(R)$ 
and the vector space of solutions of ~(\ref{eq:3})
modulo (\ref{eq:9}).
Denote finally by $\twe(B)$ the $\bfk$-vector space
\begin{equation}
\label{eq:10}
\twe(B) := \frac{\mbox {solutions $\xi \in B \ot B$ of $(\Delta \ot
    \id)\xi + \xi\ot
1 = (\id \ot \Delta)\xi + 1 \ot \xi$}} {\mbox {equivalence $\xi'' \sim \xi' +
\Delta(\gamma) - 1 \ot \gamma - \gamma \ot 1$, $\gamma\in B$.}}
\end{equation}
The $I$-linearity of~(\ref{eq:3}) and~(\ref{eq:9}) implies:

\begin{proposition}
\label{eq:11}
If $B \cot I$ is commutative, i.e.~if either $B$ is commutative or
$I^2=0$, then
\[
\udf_B(R) \cong  \twe(B) \cot I.
\]
\end{proposition}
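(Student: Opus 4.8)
The plan is to compute, under the hypothesis that $B\cot I$ is commutative, the right-hand side of the logarithmic reformulation already obtained in the text. Recall that in that situation $\exp$ and $\log$ identify $\udf_B(R)$ with the set of $f\in(B\otred B)\cot I$ that solve the linear equation~(\ref{eq:3}), taken modulo the relation~(\ref{eq:9}). Introduce the $\bfk$-linear maps
\[
L\colon B\otred B\to B^{\ot 3},\qquad L(\xi):=(\Delta\ot\id)\xi+\xi\ot1-(\id\ot\Delta)\xi-1\ot\xi,
\]
\[
\partial\colon B\to B\otred B,\qquad \partial(\gamma):=\Delta(\gamma)-1\ot\gamma-\gamma\ot1 .
\]
A routine check using coassociativity of $\Delta$ gives $L\partial=0$ (as it must, for the quotient in~(\ref{eq:10}) to be defined), so $\twe(B)=\ker L/\Im\partial$. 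After extending these maps $R$-linearly and $I$-adically, equation~(\ref{eq:3}) says precisely that $(L\cot\id_I)(f)=0$, and the relation~(\ref{eq:9}) is precisely congruence modulo the image of $\partial\cot\id_I\colon B\cot I\to(B\otred B)\cot I$. Thus the proposition amounts to the assertion that the functor $-\cot I$ commutes with the formation of $\ker L$, of $\Im\partial$, and of the quotient $\twe(B)=\ker L/\Im\partial$.

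To prove this, filter $I$ by the powers $I\supseteq I^2\supseteq I^3\supseteq\cdots$ (the subspace filtration it inherits from $R$, for which $I^n$ is the $n$-th filtration step); then $V\cot I=\varprojlim_n\bigl(V\ot_\bfk(I/I^n)\bigr)$ for every $\bfk$-vector space $V$. For each fixed $n$ the functor $-\ot_\bfk(I/I^n)$ is exact, so applying it to the short exact sequences
\begin{gather*}
0\to\ker L\to B\otred B\xrightarrow{L}\Im L\to0,\qquad
0\to\ker\partial\to B\xrightarrow{\partial}\Im\partial\to0,\\
0\to\Im\partial\to\ker L\to\twe(B)\to0
\end{gather*}
again yields short exact sequences, and in each case the transition maps of the resulting inverse systems are surjective on the left-most (sub)terms, because $I/I^{n+1}\twoheadrightarrow I/I^n$. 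Hence these systems satisfy the Mittag--Leffler condition, so $\varprojlim_n$ is exact on them. Passing to the limit shows $\ker(L\cot\id_I)=(\ker L)\cot I$, identifies $\Im(\partial\cot\id_I)$ with the subspace $(\Im\partial)\cot I$ of $(B\otred B)\cot I$, and produces a short exact sequence $0\to(\Im\partial)\cot I\to(\ker L)\cot I\to\twe(B)\cot I\to0$. Combining with the logarithmic dictionary gives
\[
\udf_B(R)\ \cong\ \frac{\ker(L\cot\id_I)}{\Im(\partial\cot\id_I)}\ =\ \frac{(\ker L)\cot I}{(\Im\partial)\cot I}\ \cong\ \bigl(\ker L/\Im\partial\bigr)\cot I\ =\ \twe(B)\cot I,
\]
which is the claim.

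The only genuinely delicate step, and the one I would expect to be the main obstacle, is the exactness of the completed tensor product $-\cot I$; everything else is either the already-established logarithmic dictionary or the routine identity $L\partial=0$. That difficulty is resolved by the remark above that all the relevant inverse systems have surjective transition maps, so Mittag--Leffler applies and no $\varprojlim^1$ term can intervene. If one prefers to avoid inverse limits, one can instead establish the statement first for Artinian $R$ — where $I$ is an honest finite-length $\bfk$-vector space and $-\ot_\bfk I$ is plainly exact — and then deduce the general case from the continuity of both functors $\udf_B(-)$ and $\twe(B)\cot(-)$ with respect to the system of Artinian quotients $R/I^n$ of $R$.
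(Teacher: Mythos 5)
Your proof is correct and takes essentially the same route as the paper: the authors' entire argument is the one-line observation that equation~(\ref{eq:3}) and the equivalence relation~(\ref{eq:9}) are $I$-linear, combined with the logarithmic dictionary already established in the surrounding text. You have simply made explicit the step they leave implicit --- that $-\cot I$ commutes with the kernel, image and quotient, which you justify correctly via exactness of $-\ot_\bfk(I/I^n)$ over a field together with the Mittag--Leffler condition for the inverse systems with surjective transition maps.
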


\begin{proposition}
\label{sec:formal-deformations-2}
For any not necessarily commutative bialgebra $B =
(B,\cdot,\Delta,1,\epsilon)$, the vector space $\twe(B)$ defined
in~(\ref{eq:10}) is isomorphic to the second cohomology group
$H^2\big(\Omega (B)\big)$ of the cobar construction $\Omega(B)$.
\end{proposition}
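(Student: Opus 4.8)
The plan is to identify $\twe(B)$ with $H^2(\Omega(B))$ by unwinding the definition of the cobar construction and matching its differential against the defining relations in~(\ref{eq:10}). Recall that the cobar construction $\Omega(B)$ of the coaugmented coalgebra $(B,\Delta,\epsilon)$ is the free graded algebra on the desuspension of the coaugmentation coideal $\overline B := \ker(\epsilon)$, placed in cohomological degree $1$, so that $\Omega(B)^n = \overline B^{\otimes n}$ (up to a degree shift I will suppress), with differential $d$ induced as a derivation from the reduced coproduct $\overline\Delta : \overline B \to \overline B \otimes \overline B$, $\overline\Delta(b) = \Delta(b) - b\otimes 1 - 1\otimes b$, by the Leibniz rule.

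First I would write out $d$ explicitly in low degrees. On $\Omega(B)^1 = \overline B$ one has $d\gamma = \overline\Delta(\gamma) = \Delta(\gamma) - 1\otimes\gamma - \gamma\otimes 1 \in \overline B\otimes\overline B = \Omega(B)^2$; this is precisely the coboundary term appearing in the denominator of~(\ref{eq:10}). On $\Omega(B)^2 = \overline B\otimes\overline B$ the derivation rule gives, for $\xi = \sum \xi' \otimes \xi''$,
\begin{equation*}
d\xi = (\overline\Delta\otimes\id)\xi - (\id\otimes\overline\Delta)\xi,
\end{equation*}
with the sign coming from the degree-$1$ placement of the first tensor factor. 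Expanding $\overline\Delta$ in terms of $\Delta$, the terms $\xi'\otimes 1 \otimes \xi''$ coming from the two pieces cancel, and $d\xi = 0$ becomes exactly $(\Delta\otimes\id)\xi + \xi\otimes 1 = (\id\otimes\Delta)\xi + 1\otimes\xi$, i.e.\ the cocycle condition in the numerator of~(\ref{eq:10}). Thus $\twe(B)$, as a quotient of $\ker(d\colon\Omega(B)^2\to\Omega(B)^3)$ modulo $\Im(d\colon\Omega(B)^1\to\Omega(B)^2)$, is tautologically $H^2(\Omega(B))$.

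Two technical points need care, and these are where the real work lies. The first is the passage from $B\otimes B$ to $\overline B\otimes\overline B$: an element $\xi\in B\otimes B$ satisfying the cocycle equation need not lie in $\overline B\otimes\overline B$, but I would show that one can normalize it into $\overline B\otimes\overline B$ without changing its class, using the unit $1\in B$ and the relations to project away the components in $1\otimes B$ and $B\otimes 1$ (equivalently, $\twe(B)$ is insensitive to this because $B = \bfk\cdot 1 \oplus \overline B$ and the cocycle/coboundary relations respect this splitting after applying $\epsilon$ in one slot). The second point is bookkeeping of signs and the degree shift in the standard definition of $\Omega(B)$; I expect the sign in the differential on $\Omega(B)^2$ to be the only subtle spot, and I would pin down the convention (e.g.\ the one in \cite{markl:handbook} or \cite{markl12:_defor}) at the outset so that $d^2=0$ is manifest and the identification with~(\ref{eq:10}) comes out with the correct signs. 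Beyond these, the proof is a direct comparison; the main obstacle is purely the normalization argument reconciling $B\otimes B$ with the reduced complex $\overline B^{\otimes\bullet}$.
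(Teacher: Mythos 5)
Your proposal is correct and follows essentially the same route as the paper's proof: identify the cocycle condition of~(\ref{eq:10}) with $d=(\overline\Delta\ot\id)-(\id\ot\overline\Delta)$ on $\overline B\ot\overline B$ via the substitution $\overline\Delta(u)=\Delta(u)-u\ot 1-1\ot u$, identify the equivalence relation with $d$ on $\overline B$, and reduce from $B\ot B$ to $\overline B\ot\overline B$ using the splitting $B\ot B\cong(\bfk\ot B\oplus B\ot\bfk)\oplus\overline B\ot\overline B$. The normalization step you correctly single out as the real content is carried out in the paper by showing that any solution of the cocycle equation lying in $\bfk\ot B\oplus B\ot\bfk$ is a multiple of $1\ot1$, which is then killed by the equivalence relation with $\gamma=1$.
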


\begin{proof}[Proof of Proposition~\ref{sec:formal-deformations-2}]
Let us recall the definition of the cobar construction. Given a~co\-augmented
coalgebra $C = (C,\Delta,\eta)$, the coaugmentation coideal is the quotient
$\overline C := C/{\rm Im}(\eta : \bfk \to C)$. One has the induced
reduced diagonal $\overline \Delta : \overline C \to \overline C \ot
\overline C$ and defines
\[
\Omega(C) : = \big( T(\uparrow \! \overline C),d\big),
\]
where $T$ is the tensor algebra functor, $\uparrow$ denotes the
suspension and the differential $d$ is the (unique) derivation
whose restriction to $\overline C$ is $\overline \Delta$.

In our case, $C$ is the coalgebra part of $B$, with the coaugmentation
$\eta : \bfk \to B$ given by $\eta(1) := 1$ which splits the counit
map $\epsilon : B \to \bfk$. Therefore, in this particular case, the
coaugmentation coideal $\overline B$ is isomorphic to ${\it Ker}(\epsilon)$.
Observe that $B^{\ot 2}$
canonically splits as
\[
B\ot B \cong (\bfk \!\ot\! B \oplus  B\! \ot\!  \bfk)
\oplus {\overline B}\ot {\overline B}.
\]
We prove that the only solutions to
(\ref{eq:3}) in $(\bfk \otred B
\oplus  B \otred  \bfk)$ are those proportional to $1 \ot 1 \ot 1$. Each
element in $(\bfk \otred B
\oplus  B \otred  \bfk)$ is of the form
\begin{equation}
\label{eq:6}
1 \ot a + b \ot 1 + \alpha (1\ot 1),
\end{equation}
where $a,b \in \overline B$ and $\alpha \in \bfk$. It is easy to see
that~(\ref{eq:3}) applied to~(\ref{eq:6}) leads to
\[
1\ot a \ot 1 + \Delta(b) \ot 1 = 1 \ot \Delta(a) + 1 \ot b \ot 1.
\]
Now, using the obvious fact that, for $u \in \overline B$,
\begin{equation}
\label{eq:8}
\overline \Delta (u) = \Delta(u) - u \ot 1 - 1 \ot u,
\end{equation}
we rewrite the above equation as
\[
2 (1 \ot a \ot 1) + 1\ot 1 \ot a + \overline \Delta(b) \ot 1 =
2 (1 \ot b \ot 1) + b\ot 1 \ot 1 + 1 \ot \overline \Delta(a)
\]
Since ${\it Im}(\overline \Delta) \subset \overline B \otred \overline
B$, the above equation implies that both $a$ and $b$ are proportional
to $1$, which, since we assumed that $a,b \in \overline B$, may happen
only when $a=b=0$.

So the only solutions of the form~(\ref{eq:6}) are proportional to
$1\otred 1$. Taking~(\ref{eq:9}) with $g=1$ shows that this solution is
equivalent to $0$.

We may therefore consider only solutions of~(\ref{eq:3}) that belong to
$\overline B \otred \overline B$. Using~(\ref{eq:8}), one easily sees
that~(\ref{eq:3}) is equivalent to
\[
(\overline \Delta \ot \id)(f) = (\id \ot \overline \Delta)(f)
\]
which means that~$f$ is annihilated by the differential $d$ in the cobar
construction. Likewise, (\ref{eq:9}) for $g \in \overline B$ reads
$f'' = f' + d(g)$ meaning that $f'$ differs from $f''$ by a boundary.
This finishes the proof.
\end{proof}

An immediate consequence of Propositions
\ref{sec:formal-deformations-2} and~\ref{eq:11} is

\begin{theorem}
\label{Zitra_prijede_Jaruska.}
If $B \cot I$ is commutative, then the set\/ $\udf_B(R)$ of equivalence classes of
UDF's over $R$ based on $B$ 
is isomorphic to space $H^2\big(\Omega (B)\big) \cot I$.
\end{theorem}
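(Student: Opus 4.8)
The plan is to simply compose the two isomorphisms already established in the excerpt. Proposition~\ref{eq:11} gives, under the hypothesis that $B\cot I$ is commutative, an isomorphism $\udf_B(R)\cong\twe(B)\cot I$, obtained from the exponential/logarithm correspondence between twisting elements of the form~(\ref{eq:2}) and solutions of the linearized equation~(\ref{eq:3}) modulo the equivalence~(\ref{eq:9}). Proposition~\ref{sec:formal-deformations-2} identifies the $\bfk$-vector space $\twe(B)$ with $H^2\big(\Omega(B)\big)$, the second cohomology of the cobar construction on the coalgebra underlying $B$. Tensoring the latter isomorphism with $I$ over $\bfk$ and splicing it onto the former yields
\[
\udf_B(R)\;\cong\;\twe(B)\cot I\;\cong\;H^2\big(\Omega(B)\big)\cot I,
\]
which is the assertion of Theorem~\ref{Zitra_prijede_Jaruska.}.

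The only point requiring a word of care is the passage between the ordinary tensor product $\ot$ used implicitly in $\twe(B)\ot I$ and the completed tensor product $\cot$ appearing in the statement; since $H^2\big(\Omega(B)\big)$ is a $\bfk$-vector space and $I$ is the maximal ideal of a complete Noetherian local ring, one checks that $\cot$ here is the appropriate (and in the relevant graded-piece sense, the only sensible) interpretation, exactly as it was already in Proposition~\ref{eq:11}. One should also note that the $I$-linearity of equations~(\ref{eq:3}) and~(\ref{eq:9}), which was the ingredient making Proposition~\ref{eq:11} work, is unaffected by further identifying the coefficient space $\twe(B)$ with cobar cohomology, so the composite isomorphism is again natural in $R$ and compatible with the functorial structure of $\udf_B(-)$.

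I do not anticipate a genuine obstacle: the theorem is explicitly flagged in the excerpt as ``an immediate consequence'' of Propositions~\ref{sec:formal-deformations-2} and~\ref{eq:11}, and the proof is a one-line concatenation. If anything, the mild subtlety is purely bookkeeping — making sure the hypothesis ``$B\cot I$ commutative'' (equivalently, $B$ commutative or $I^2=0$) is invoked only where Proposition~\ref{eq:11} needs it, while Proposition~\ref{sec:formal-deformations-2} is used in full generality with no commutativity assumption on $B$. I would therefore write the proof as: ``Combine Propositions~\ref{eq:11} and~\ref{sec:formal-deformations-2}, the latter tensored with $I$,'' possibly with one sentence recalling that this composition is manifestly functorial in $R$.
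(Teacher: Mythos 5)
Your proposal is correct and coincides with the paper's own argument: the theorem is stated there as an immediate consequence of Propositions~\ref{eq:11} and~\ref{sec:formal-deformations-2}, obtained precisely by splicing $\udf_B(R)\cong\twe(B)\cot I$ with $\twe(B)\cong H^2\big(\Omega(B)\big)$. Your remarks on where the commutativity hypothesis enters and on the completed tensor product match the paper's intent; nothing further is needed.
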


\begin{example}
Assume that $B$ is the ring of polynomials $\bfk[p_1,p_2,\ldots]$ with
the standard coalgebra structure, i.e.~the free commutative
associative algebra generated by the vector space $P = {\rm
  Span}(p_1,p_2,\ldots)$, with the coalgebra structure determined by
requiring that $P$ is the subspace of primitives.

The calculation of $H^*\big(\Omega (B)\big)$ is in this case
classical. One has the isomorphism
\begin{equation}
\label{clas}
H^*\big(\Omega (B)\big) \cong \Ext^* P
\end{equation}
with the exteriors (Grassmann) algebra
of the vector space $P$. In particular,
$H^2\big(\Omega (B)\big)\cong P \land P$, so the space $\udf_B(R)$ of equivalence
classes of UDF's based on $B$ over an arbitrary local complete
Noetherian ring $R = (R,I)$ is isomorphic to $(P \land P) \cot I$.
\end{example}

Theorem~\ref{Zitra_prijede_Jaruska.} 
in particular describes equivalence classes
of UDF's based on commutative bialgebras. Little is known about the
general case though one may still say something when $B$ is the
universal enveloping algebra $\calU(L)$ of a Lie algebra $L$. 
According to Drinfel'd, 
for each unital solution $r
\in L \otred L$ of the classical Yang-Baxter equation there exists a UDF
over $\bfk[[t]]$ based on $\calU(L)$ whose linear part
$F_1$ equals $r$, see~\cite{gia-zhang} for details. As we will see later in
Proposition~\ref{sec:recoll-class-results-3}, $B$-module algebra
structures for this type of bialgebras have a simple characterization
in terms of derivations.

{}For an operadic interpretation of the logarithmic trick we
need the `logarithmic' version of the operad $\bbB$. It is, by
definition, the collection $\bbb = \{\bbb(n)\}_{n \geq 1}$ with
$\bbb(n) := B^{\ot n}$ equipped with $\circ_i$-operations
\[
\circ_i : \bbb(m) \oplus \bbb(n) \to \bbb(m+n-1)
\]
given, for $\Rada u1m,\Rada v1n \in B$, by
\begin{eqnarray*}
\lefteqn{
(u_1 \ot \cdots \ot u_m)\circ_i (v_1 \ot \cdots \ot v_n)}
\\
&:=&
\big(u_1 \ot u_{i-1} \ot \Delta^{n-1}(u_i) \ot
u_{i+1} \ot \cdots \ot u_m\big) +
\big(1^{\ot (i-1)} \ot   (v_1 \ot \cdots \ot v_n) \ot 1^{\ot (m-i)}\big).
\end{eqnarray*}
The operad unit $\id \in \bbb$ is $0\in B = \bbb(1)$ 
and the action of  $\Sigma_n$ on $\bbb(n)$
permutes the factors.

\begin{proposition}
The object $\bbb = \{\bbb(n)\}_{n \geq 0}$ described above is an
non-$\Sigma$
operad with unit $0 \in  \bbb(1) = B$ in the cartesian monoidal
category of vector spaces with the monoidal structure given by the
direct sum. If $B$ is cocommutative, then $\bbb$ is a $\Sigma$-operad.
\end{proposition}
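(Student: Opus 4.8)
The plan is to verify directly that $\bbb$ satisfies the axioms of a non-$\Sigma$ operad in $(\mathrm{Vect},\oplus)$, using as a guiding principle that $\bbb$ is the ``logarithmic'' or ``tangent'' version of $\bbB$: informally, an element $F = 1^{\ot n} + \varepsilon f$ of $\bbB(n)$ (with $\varepsilon^2=0$) should compose in $\bbB$ exactly as $0 \oplus f$ composes in $\bbb$, to first order in $\varepsilon$. This heuristic both predicts every formula and tells us which identity in $\bbB$ to differentiate in order to get the corresponding identity in $\bbb$. Concretely, I would first record the two pieces of the $\circ_i$-operation separately: write $(u_1\ot\cdots\ot u_m)\circ_i(v_1\ot\cdots\ot v_n) = D_i(u;v) + T_i(v)$ where $D_i$ is the ``diagonal'' term $u_1\ot\cdots\ot\Delta^{n-1}(u_i)\ot\cdots\ot u_m$ (note this term does not involve the $v$'s except through the shape) and $T_i(v) = 1^{\ot(i-1)}\ot(v_1\ot\cdots\ot v_n)\ot 1^{\ot(m-i)}$ is the ``insertion'' term (not involving the $u$'s at all); since we are in the direct-sum monoidal category, bilinearity of $\circ_i$ over $\oplus$ is automatic once we check additivity in each slot, which is immediate from these formulas.

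Next I would check the unit axioms. For the right unit, $\xi \circ_i (0\in B) = \xi\circ_i \id$ should equal $\xi$: here $n=1$, so $\Delta^{0}(u_i) = u_i$, the diagonal term is just $u_1\ot\cdots\ot u_m = \xi$, and the insertion term is $1^{\ot(i-1)}\ot 0\ot 1^{\ot(m-i)} = 0$ in $B^{\ot m}$, giving $\xi$ on the nose. For the left unit, $(0\in B)\circ_1 \eta = \eta$ for $\eta\in\bbb(n)$: now $m=1$, $i=1$, the diagonal term is $\Delta^{n-1}(0) = 0$ (because $0$ is grouplike-additively, i.e.\ primitive: $\Delta^{n-1}$ applied to a primitive-type element of a direct-sum operad collapses, or more simply $\Delta^{n-1}(0)=0$), and the insertion term is $(v_1\ot\cdots\ot v_n) = \eta$, again on the nose.

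The main work is the two associativity axioms, and this is where I expect the one genuine obstacle to lie. There are the usual two cases: (a) the ``sequential'' case $(\xi\circ_i\eta)\circ_{i+j-1}\zeta = \xi\circ_i(\eta\circ_j\zeta)$ and (b) the ``parallel'' case $(\xi\circ_i\eta)\circ_k\zeta = (\xi\circ_k\zeta)\circ_{i-1+m_\eta}\eta$ type identity for $i$ and $k$ in disjoint blocks. Because $\circ_i$ is a sum $D_i + T_i$ of a diagonal part and an insertion part, each side of an associativity equation expands into a sum of several terms; the plan is to match them by their ``support'' (which tensor slots are genuinely acted on versus filled with $1$'s or $0$'s). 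The insertion terms $T$ are essentially the insertion terms of the trivial/linear operad and their associativity is the combinatorics of relabelling; the diagonal terms $D$ reproduce (a shadow of) the associativity of $\bbB$, which ultimately rests on coassociativity of $\Delta$ in the form $\Delta^{m+n-1} = (\id^{\ot i-1}\ot\Delta^{n-1}\ot\id^{\ot m-i})\Delta^{m-1}$; the cross terms (one factor diagonal, one factor insertion) are where bookkeeping is heaviest, and the key point will be that a diagonal operation applied \emph{after} an insertion of $1$'s acts trivially on those slots since $\Delta^{k-1}(1) = 1^{\ot k}$, so the cross terms on the two sides collapse to the same insertion-type expression. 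I would organize this by fixing notation for the block decomposition of $\{1,\dots,m+n-1\}$ once and for all and then checking equality slot-by-slot.

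Finally, for the $\Sigma$-statement: when $B$ is cocommutative, $\Delta^{n-1}:B\to B^{\ot n}$ is equivariant for the $\Sigma_n$-action permuting factors, so the diagonal term $D_i$ transforms correctly under block permutations, and the insertion term $T_i$ visibly does; hence the $\Sigma_n$-actions are compatible with the $\circ_i$ in the sense required of a symmetric operad, and the same slot-by-slot comparison as above, now decorated with permutations, goes through. I do not expect any surprise here beyond the need to track the induced permutations of blocks carefully, exactly as in the proof of Proposition~\ref{bigB} for $\bbB$.
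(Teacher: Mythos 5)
Your proposal is correct and follows essentially the same route as the paper, which disposes of this proposition in one line (``the proof is similar to that of Proposition~\ref{bigB}''): direct verification of the unit and associativity axioms, with the sequential associativity case reducing to coassociativity of $\Delta$ exactly as in~(\ref{eq:12}), and cocommutativity giving the $\Sigma$-equivariance. Your decomposition $\circ_i = D_i + T_i$ and the observation that the cross terms collapse because $\Delta^{k-1}(1)=1^{\ot k}$ is precisely the extra bookkeeping that the adaptation from $\bbB$ to $\bbb$ requires, so the plan would go through as stated.
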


The proof is similar to that of Proposition \ref{bigB}.  Observe that
the operad $\bbb$ does not use the multiplication in $B$ so that it
makes sense for an arbitrary not necessarily cocommutative coalgebra
with a group-like element $1$.  One may formulate a characterization
of operads of this type analogous to Proposition
\ref{sec:an-explanation-1}, which we leave as an exercise.

\begin{proposition}
\label{sec:deformations}
If $B \cot I$ is commutative, then UDF's
based on $B$ over $R$ are in one-to-one correspondence with  operad morphisms
\[
\phi : \underline{\Associative} \to \bbb \cot I,
\]
where $\underline{\Associative}$ is the non-$\Sigma$
operad for (nonunital) associative algebras.
\end{proposition}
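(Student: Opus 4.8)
The plan is to mirror the operadic reformulation of twisting elements (Propositions~\ref{sec:an-explanation-4} and~\ref{Jarunka_vcera_odletela_do_Prahy}) but now in the logarithmic picture, with $\bbB$ replaced by $\bbb$ and $\UAss$ replaced by the nonunital associativity operad $\underline{\Associative}$. First I would recall that $\underline{\Associative}$ is generated by a single binary operation $m \in \underline{\Associative}(2)$ subject to the single relation $m\circ_1 m = m\circ_2 m$ (there is no unit generator and no unit relation, which is exactly why we can drop the $n=0$ part). Hence an operad morphism $\phi : \underline{\Associative} \to \bbb\cot I$ is the same datum as a single element $f := \phi(m) \in \bbb(2)\cot I = (B\ot B)\cot I$ satisfying $f\circ_1 f = f\circ_2 f$ in $\bbb(3)\cot I$. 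Unwinding the definition of the $\circ_i$-operations in $\bbb$ — which are built from $\Delta$ and the direct-sum monoidal structure, with unit $0$ — one checks directly that this associativity equation becomes precisely
\[
(\Delta\ot\id)f + f\ot 1 = (\id\ot\Delta)f + 1\ot f
\]
tensored with $I$, i.e.~equation~(\ref{eq:3}). This is the logarithmic avatar of the computation ``(\ref{eq:25}) $\Leftrightarrow$ (\ref{d1})'' in the proof of Proposition~\ref{sec:an-explanation-4}; the key point is that the $\circ_i$ in $\bbb$ are exactly the ``infinitesimal'' or ``additive'' versions of those in $\bbB$, so exponentiating a $\phi$ produces a $\Phi : \UAss \to \bbB\cot R$ and conversely $\log$ of a $\Phi$ lands in $\bbb\cot I$ — here is where the commutativity of $B\cot I$ is used, exactly as in the logarithmic trick of Section~\ref{sec:deformations-udfs}.

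Having matched the objects, I would then match the equivalences. On the operadic side there is no notion of ``equivalence of morphisms'' built in, so the correspondence I am really after is between UDF's over $R$ based on $B$ — in the sense of Definition~\ref{sec:formal-deformations1}, \emph{before} quotienting by~(\ref{eq:7}) — and morphisms $\phi$. Concretely: by the logarithmic trick, $F \mapsto f = \log(F)$ is a bijection between twisting elements $F = 1\otred 1 + F_\circ$ with $F_\circ \in (B\ot B)\cot I$ and solutions $f \in (B\ot B)\cot I$ of~(\ref{eq:3}) (note that~(\ref{d2}) for $F$ of this shape, equivalently the counit normalisation, is automatic because $F_\circ$ has entries in $I$ and $\epsilon$ kills nothing forced — this is the remark ``Condition~(\ref{d2}) \ldots is satisfied automatically'' already in the text). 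Composing with the identification just established, $F \mapsto \log(F) = \phi(m)$ gives the desired bijection between UDF's and morphisms $\phi : \underline{\Associative} \to \bbb\cot I$. If the intended statement is instead at the level of moduli, one invokes~(\ref{eq:9}): two $\phi',\phi''$ are declared equivalent iff $\phi''(m) = \phi'(m) + \Delta(g) - 1\ot g - g\ot 1$ for some $g \in B\cot I$, and under $\log$ this is precisely~(\ref{eq:7}); I would state the proposition and proof for the pointed version and add a one-line remark that it descends to moduli spaces, paralleling the passage to $\twe(B)$ in~(\ref{eq:10}).

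The only genuine thing to verify carefully is the first step: that the single relation $m\circ_1 m = m\circ_2 m$ in $\bbb$, written out via the explicit formula for $\circ_i$, reproduces~(\ref{eq:3}) on the nose, including getting the $f\ot 1$ and $1\ot f$ summands with the right coefficients from the ``$1^{\ot(i-1)}\ot(v_1\ot\cdots\ot v_n)\ot 1^{\ot(m-i)}$'' term. This is a short direct calculation entirely analogous to the one in the proof of Proposition~\ref{sec:an-explanation-4}, just additive instead of multiplicative, so I expect no real obstacle there. The subtler bookkeeping point — and the place where one must be a little careful — is confirming that the commutativity hypothesis on $B\cot I$ is exactly what makes $\exp/\log$ an operad isomorphism between $\bbB\cot I$-valued and $\bbb\cot I$-valued morphisms (equivalently, that $\log$ of the $\bbB$ relation~(\ref{d1}) is the $\bbb$ relation~(\ref{eq:3})); but this is precisely the content of the ``logarithmic trick'' paragraph, which we are entitled to assume, so the proof reduces to assembling the pieces. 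I would therefore keep the written proof to three or four sentences: identify $\phi$ with $f = \phi(m)$ and the relation with~(\ref{eq:3}); note $\log$ is a bijection onto such $f$ by the logarithmic trick; conclude; remark on descent to moduli.
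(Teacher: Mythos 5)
Your proposal is correct and follows essentially the same route as the paper: identify a morphism $\phi$ with the single element $f=\phi(m)$ subject to $f\circ_1 f=f\circ_2 f$, observe that the $\circ_i$-operations of $\bbb$ turn this relation into equation~(\ref{eq:3}), and then invoke the logarithmic trick to match solutions of~(\ref{eq:3}) with UDF's. The paper's proof is just a terser version of this (it leaves the unwinding of $\circ_i$ and the appeal to the logarithmic trick implicit), and your extra remarks on the pointed-versus-moduli reading and on descent via~(\ref{eq:9}) are consistent with the text.
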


\begin{proof}
The operad
$\underline{\Associative}$ is generated by $m \in
\underline{\Associative}(2)$ such that $m\circ_1 m = m \circ_2 m$.
An operad morphism $\phi : \underline{\Associative} \to \bbb \cot I$
is therefore the same as an element $f := \phi(m) \in \bbb(2) \cot I = (B
\otred B)\cot I$ such that  $f\circ_1 f = f \circ_2 f$. It follows
from the definition of the $\circ_i$-operations in  
$\bbb$ that the last equation is
precisely~(\ref{eq:3}). 
\end{proof}

\section{Examples}

The purpose of this section is to review two well-known examples of
deformations coming from UDF's.  We however start by a useful
characterization of $B$-module algebra structures on a given unital
associative algebra $A$ when $B$ is primitively generated and
cocommutative i.e., by the classical Milnor-Moore theorem,
when $B = \calU (L)$ is the universal enveloping algebra of a Lie
algebra $L$. The following proposition is implicit in~\cite{gia-zhang}.

\begin{proposition}
\label{sec:recoll-class-results-3}
Let $A$ be an associative algebra and $B = \calU (L)$. Then there is a
one-to-one correspondence between $B$-module algebra structures on $A$
and Lie algebra morphisms
\begin{equation}
\label{jdu_si_zabehat}
\alpha : L \to \Der(A)
\end{equation}
from $L$ to the Lie algebra $\Der(A)$ of derivations of $A$.
\end{proposition}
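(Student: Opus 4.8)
The plan is to set up the correspondence in both directions and check that they are mutually inverse. Recall that $B = \calU(L)$ is generated as an algebra by $L$, with the bialgebra structure determined by declaring elements of $L$ primitive, i.e.~$\Delta(x) = x \otred 1 + 1 \otred x$ and $\epsilon(x) = 0$ for $x \in L$. A left $B$-module algebra structure on $A$ is, by definition, a left $B$-module structure on $A$ together with the two compatibility conditions: $B$-linearity of $\mu_A : A \ot A \to A$, which in Sweedler notation reads $b(a_1 a_2) = \sum (b_{(1)} a_1)(b_{(2)} a_2)$, and $b \cdot 1_A = \epsilon(b) 1_A$.

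First I would unwind what these conditions say on generators $x \in L$. Write $\alpha(x)$ for the linear endomorphism $a \mapsto x \cdot a$ of $A$. Since $\Delta(x) = x \otred 1 + 1 \otred x$, the multiplicativity condition applied to $b = x$ becomes $\alpha(x)(a_1 a_2) = \alpha(x)(a_1) \cdot a_2 + a_1 \cdot \alpha(x)(a_2)$, which is exactly the Leibniz rule, so $\alpha(x) \in \Der(A)$; and $\epsilon(x) = 0$ forces $\alpha(x)(1_A) = 0$, which is automatic for a derivation anyway. Moreover the module axioms say $\alpha : L \to \ENd(A)$ extends to an algebra map $\calU(L) \to \ENd(A)$, and by the universal property of $\calU(L)$ such algebra maps correspond precisely to Lie algebra maps $L \to \ENd(A)_{\mathrm{Lie}}$; combined with the previous observation the image lands in $\Der(A)$, giving a Lie map $\alpha : L \to \Der(A)$ as in~(\ref{jdu_si_zabehat}).

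Conversely, given a Lie algebra morphism $\alpha : L \to \Der(A)$, the universal property of $\calU(L)$ produces a unique algebra morphism $\calU(L) \to \ENd(A)$, hence a left $B$-module structure on $A$. I would then verify that $\mu_A$ is $B$-linear: since the set of $b \in B$ satisfying $b(a_1 a_2) = \sum (b_{(1)} a_1)(b_{(2)} a_2)$ for all $a_1, a_2$ is a subalgebra of $B$ (a routine check using coassociativity and the fact that $\Delta$ is an algebra map), and it contains $L$ by the Leibniz rule, it is all of $B = \calU(L)$. The same argument, or directly the fact that derivations kill $1_A$, shows $b \cdot 1_A = \epsilon(b) 1_A$ on generators and hence everywhere. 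Finally I would note the two constructions are inverse to each other, since each is determined by its effect on $L \subset B$.

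The only mildly delicate point is the verification that the ``multiplicativity locus'' $\{b \in B : b(a_1 a_2) = \sum (b_{(1)}a_1)(b_{(2)}a_2)\ \forall a_1,a_2\}$ is closed under the product of $B$; this is where coassociativity and the bialgebra axiom $\Delta(bb') = \Delta(b)\Delta(b')$ enter, and it is the standard argument that a module-algebra structure is detected on algebra generators of $B$. Everything else is formal manipulation with the universal property of the enveloping algebra, so I do not expect a genuine obstacle — the proposition is, as the paper says, essentially implicit in~\cite{gia-zhang}, and the proof is short.
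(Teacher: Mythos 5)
Your proof is correct and follows essentially the same route as the paper's: the universal property of $\calU(L)$ identifies $B$-module structures with Lie maps $L \to \ENd(A)_{\mathrm{Lie}}$, primitivity of $L$ turns $B$-linearity of the product into the Leibniz rule on generators, and the fact that $L$ generates $B$ as an algebra gives sufficiency. The only difference is that you spell out the sufficiency step (the ``multiplicativity locus'' being a subalgebra) which the paper states more tersely as ``each element of $B$ is a linear combination of finite products of primitive elements''; this is a welcome elaboration, not a divergence.
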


\begin{proof}
A $B$-module structure on $A$ is the same as a
morphism $\calU(L) \to \ENd(A)$ to the
algebra of endomorphisms of the vector space $A$. 
By the universal property of
enveloping algebras, these morphisms are in one-to-one correspondence
with Lie algebra
morphisms $\alpha : L \to \End(A)_{\it Lie}$ to the Lie
algebra associated to $\End(A)$. 

We finish the proof by verifying that the multiplication in $A$ is
linear under the action induced by $\alpha$ if and only if 
\begin{equation}
\label{eq:14}
\Im(\alpha) \subset \Der(A).
\end{equation}

Any $x \in L$ is primitive, i.e.~$\Delta(x) = x \ot 1 + 1 \ot x$, therefore the
$B$-linearity of the multiplication in $A$ requires that
\[
\alpha(x) (a' \cdot a'') =  \alpha(x) (a') \cdot a'' + a' \cdot
\alpha(x) (a'')\ \mbox { for each }  a',a'' \in A,   
\]
i.e.~$\alpha(x) \in \Der(A)$ for each $x\in L$. This shows the
necessity of~(\ref{eq:14}). On the other hand, since each element of $B =
\calU(L)$ is a linear combination of finite products of primitive elements, the 
inclusion~(\ref{eq:14}) is also sufficient. The condition  $b \cdot 1
= \epsilon(b) \cdot 1$ for each $b\in B$ is
for the actions of the above type satisfied automatically.
\end{proof}

\begin{example}
\label{sec:recoll-class-results-31}
There are three important particular instances 
of Proposition \ref{sec:recoll-class-results-3}. 

\noindent 
{\it Case 1: $L$ is abelian.} If $L := \Span(p_1,p_2,\ldots)$ with the trivial
bracket, then $B =
\calU(L)$ is the polynomial algebra $\bfk[p_1,p_2,\ldots]$.  By Proposition
\ref{sec:recoll-class-results-3}, 
$B$-module algebra structures on $A$ are given by mutually
commuting derivations $\theta_1 , \theta_2, \ldots \in \Der(A)$. 

\noindent 
{\it Case 2: $L$ is free.} Let $L := {\mathbb L}(X)$ be the free Lie
algebra on the vector space $X := \Span(e_1,e_2,\ldots)$. Then $B$ is
the tensor algebra $T(X)$ and $B$-module algebra structures on $A$ are
in one-to-one correspondence with (arbitrary) derivations $\theta_1,\theta_2,
\ldots \in \Der(A)$.

\noindent 
{\it Case 3: $L  = \Der(A)$.} In this case one has the `tautological'
$\calU\big(\Der(A)\big)$-module algebra structure given by the identity map $\id :
\Der(A) \to \Der(A)$. 
Each
$\calU(L)$-module algebra action clearly 
uniquely factorizes via this action of $\calU\big(\Der(A)\big)$.
In particular, each twisting element in $\otexp{\calU(L)}2$ 
determines a twisting element in
$\otexp{\calU\big(\Der(A)\big)}2$. 
\end{example}

\begin{example}
Let  $B$ be the `group ring' $\bfk[M]$ of an associative unital monoid
$M$. It is the bialgebra defined precisely as the group ring of a
group, except that it may not have an antipode due to the lack of the
inverse in $M$. 

As in the proof
of Proposition~\ref{sec:recoll-class-results-3} one easily verifies
that $B$-module algebra structure on an associative algebra $A$ is the
same as a morphism of unital monoids
\[
M \to {\it Alg}(A,A),
\]
where ${\it Alg}(A,A)$ is the monoid of algebra morphisms $f : A \to
A$.
\end{example}

\begin{example}
\label{sec:examples}
Very particular examples of formal deformations induced by a UDF are
the Moyal bracket and the quantum plane. In both cases, $B :=
\bfk[p_1,p_2]$, the UDF is given by exponentiating $p_1 \land p_2$,
i.e.
\begin{equation}
\label{maji_na_mne_neco?}
F :=
\exp\left[\frac t2(p_1 \ot p_2 - p_2 \ot p_1)\right]
\end{equation}
and $A := \bfk[p,q]$.  In the case of the {\em Moyal product\/}, $B$
acts on a polynomial $f \in A$ by
\[
p_1 f :=  \frac{\partial f}{\partial p} \ \mbox { and } \
p_2 f :=  \frac{\partial f}{\partial q}.
\]
The Moyal product is therefore the deformation of $A$ given by
\begin{align*}
f * g 
:& = 
\exp\left[\frac t2\left(\frac{\partial \hphantom {f}}{\partial p} \frac{\partial
    \hphantom {g}}{\partial q} -  
\frac{\partial \hphantom {g}}{\partial q} \frac{\partial
    \hphantom {f}}{\partial p}\right)\right](f\ot g)
\\
&= fg + \frac t2\left(\frac{\partial  {f}}{\partial p} \frac{\partial
     {g}}{\partial q} -  
\frac{\partial  {f}}{\partial q} \frac{\partial
     {g}}{\partial p}\right)+
 \frac{t^2}{2^2 \cdot 2!}\left(\frac{\partial^2  {f}}{\partial p^2} \frac{\partial^2
     {g}}{\partial q^2} - 2 
\frac{\partial^2  {f}}{\partial p\partial q} \frac{\partial^2
     {g}}{\partial p\partial q} +  \frac{\partial^2  {f}}{\partial q^2} \frac{\partial^2
     {g}}{\partial p^2} \right) + \cdots
\end{align*}

In the case of the {\em quantum plane\/}, $B$ acts on a polynomial $f
\in A$ as
\begin{equation}
\label{dopustil_jsem_se_neceho?}
p_1 f :=  p \frac{\partial f}{\partial p} \ \mbox { and } \
p_2 f :=  q \frac{\partial f}{\partial q}.
\end{equation}
The product in the quantum plane therefore equals
\[
f * g := \exp\left[t\left( 
p\frac{\partial \hphantom{f}}{\partial p}\ q \frac{\partial
    \hphantom{g}}{\partial q} - q \frac{\partial
\hphantom{g}}{\partial q}\  
p\frac{\partial
    \hphantom{f}}{\partial p}\right)\right] (f \ot g).
\] 
\end{example}

\section{Deformations coming from UDF's.}

In this section we address the size of the set of
deformations coming from UDF's in the moduli space of all
deformations.  For an algebra $A$ denote by $\Def_A(R)$ the moduli
space of deformations of $A$ over $R$. Each universal deformation
formula $F \in \udf_B(R)$ determines, via~(\ref{eq:5}),  
an element in $\Def_A(R)$.  This
gives a map
\begin{equation}
\label{zase_srdce}
\mfI_{B,A}(R): \udf_B(R) \to \Def_A(R)
\end{equation}
which in fact assembles into a transformation $\mfI_{B,A}(-): \udf_B(-)
\to \Def_A(-)$ of functors of local complete Noetherian rings. The 
map~(\ref{zase_srdce}) is of course not an epimorphism in general
-- take as $B$ the trivial bialgebra and as 
$A$ any algebra admitting non-trivial
deformations. There is however an interesting particular case when
$\mfI_{B,A}(R)$ is fully understood. 

Recall that a commutative
associative algebra $A$ is {\em smooth\/} if its module $\Omega^1(A)$
of K\"ahler differentials is projective over $A$. Examples are
coordinate rings of smooth algebraic varieties, see
e.g.~\cite[Chapter~9]{ginzburg}. One has:  

\begin{proposition}
\label{dnes_konference_v_Mulhouse}
Assume that $A$ is a smooth algebra,
$B = \calU\big(\Der(A)\big)$ acting on $A$ as in the 3th case of
Example~\ref{sec:recoll-class-results-31} and $R$ the ring $D$ of dual
numbers. Then~(\ref{zase_srdce}) is
an epimorphism whose kernel equals the kernel of the map
\[
\Der(A)\ \Ext\ \Der(A) \longrightarrow \Der(A)\ \Ext_A\ \Der(A),
\]
where $\Ext$ (resp.~$\Ext_A$) denotes the exterior (wedge) product over $\bfk$
(resp.~over $A$).
\end{proposition}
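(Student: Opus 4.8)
The plan is to unwind both sides of~(\ref{zase_srdce}) at the infinitesimal level $R = D = \bfk[t]/(t^2)$ and compare the resulting linear-algebra data. By Theorem~\ref{Zitra_prijede_Jaruska.} (the bialgebra $B = \calU(L)$ with $L = \Der(A)$ is cocommutative, but more to the point $I^2 = 0$ for $R = D$, so $B \cot I$ is automatically commutative) we have $\udf_B(D) \cong H^2\big(\Omega(B)\big) \cot I$. Since $B = \calU(L)$ is the universal enveloping of a Lie algebra, Proposition~\ref{sec:recoll-class-results-3} together with the classical Chevalley--Eilenberg / Koszul-type computation identifies $H^2\big(\Omega(\calU(L))\big)$ with the degree-two part of the exterior coalgebra on $L$; for the primitively generated cocommutative $B$ one gets exactly $\Der(A) \Ext \Der(A)$ in the notation of the statement (this is the analogue, for a general Lie algebra rather than an abelian one, of the isomorphism $H^*(\Omega(B)) \cong \Ext^* P$ displayed in the Example after Theorem~\ref{Zitra_prijede_Jaruska.}; one should spell out that the relevant low-degree piece is insensitive to the bracket). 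On the other side, $\Def_A(D) \cong HH^2(A,A)$, the second Hochschild cohomology, which for a smooth commutative algebra $A$ is computed by the Hochschild--Kostant--Rosenberg theorem to be $\land^2_A \Der(A) = \Der(A) \Ext_A \Der(A)$, the second exterior power of derivations taken over $A$.

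The next step is to identify the map $\mfI_{B,A}(D)$ under these two identifications. A UDF over $D$ has the form $F = 1 \otred 1 + t F_1$ with $F_1 \in B \otred B$ a solution of the linearized twisting equation~(\ref{eq:3}); passing to the cobar description, $F_1$ represents a class in $H^2(\Omega(B)) \subset \overline B \otred \overline B$, and after the logarithmic normalization we may take $F_1 \in L \Ext L \subset L \ot L$ antisymmetric. The induced deformed multiplication is $a * b = \mu_A(F(a\ot b)) = ab + t\, \mu_A(F_1(a\ot b))$, and the Hochschild cocycle of this infinitesimal deformation is precisely $(a,b) \mapsto \mu_A(F_1(a\ot b))$, i.e. the image of $F_1 \in \Der(A)\Ext\Der(A)$ under the obvious bilinear map sending $\theta \land \theta'$ to the biderivation $(a,b)\mapsto \theta(a)\theta'(b) - \theta'(a)\theta(b)$. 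Under HKR this biderivation is exactly the image of $\theta \land \theta'$ under the natural surjection $\Der(A)\Ext\Der(A) \to \Der(A)\Ext_A\Der(A)$. Thus, after tensoring with $I$ (which is one-dimensional, so plays no essential role here), $\mfI_{B,A}(D)$ is identified with the canonical map $\Der(A)\Ext\Der(A) \to \Der(A)\Ext_A\Der(A)$ — whence both the asserted surjectivity and the description of the kernel follow immediately.

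The main obstacle, and the place where smoothness is genuinely used, is twofold. First, one must justify that $HH^2(A,A) \cong \land^2_A \Der(A)$ and, crucially, that under this iso the class of an infinitesimal deformation built from a biderivation is represented by that biderivation viewed as an antisymmetric element of $\land^2_A\Der(A)$ — this is the statement of HKR, valid precisely because $A$ is smooth (so $\Omega^1(A)$, hence $\Der(A) = \mathrm{Hom}_A(\Omega^1(A),A)$, is projective and the Hochschild complex is formal in low degrees). Second, one must check that the map $\Der(A)\Ext\Der(A) \to \Def_A(D)$ factoring through HKR is literally the composite of the algebraic surjection onto $\land^2_A\Der(A)$ with the HKR identification — i.e. that no correction terms appear — which is a direct but slightly delicate cocycle computation with the explicit formula $a*b = ab + t(\theta(a)\theta'(b) - \theta'(a)\theta(b))$. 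Surjectivity of $\mfI_{B,A}(D)$ is then the surjectivity of $\Der(A)\Ext\Der(A) \to \Der(A)\Ext_A\Der(A)$, which is clear since every element of $\land^2_A\Der(A)$ is an $A$-linear combination, hence a $\bfk$-linear combination, of elementary wedges $\theta\land\theta'$. I would single out the compatibility-with-HKR verification as the step deserving the most care; everything else is a matter of assembling Theorem~\ref{Zitra_prijede_Jaruska.}, Proposition~\ref{sec:recoll-class-results-3}, and standard deformation theory.
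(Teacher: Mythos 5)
Your proposal is correct and follows essentially the same route as the paper: identify $\udf_B(D)$ with $H^2\big(\Omega(B)\big)\cong \Der(A)\,\Ext\,\Der(A)$ via Propositions~\ref{eq:11} and~\ref{sec:formal-deformations-2} together with~(\ref{clas}), identify $\Def_A(D)$ with $H^2(A,A)\cong \Der(A)\,\Ext_A\,\Der(A)$ via Hochschild--Kostant--Rosenberg, and read off surjectivity and the kernel. You go further than the paper in one useful respect: you explicitly check that under these identifications $\mfI_{B,A}(D)$ becomes the canonical map $\Der(A)\,\Ext\,\Der(A)\to\Der(A)\,\Ext_A\,\Der(A)$ (and that the cobar cohomology of $\calU(L)$ is insensitive to the bracket, via PBW), steps the paper leaves implicit but which are needed for the stated conclusion.
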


\begin{proof} 
Propositions~\ref{eq:11} and~\ref{sec:formal-deformations-2} imply
that $\udf_B(D) \cong H^2\big(\Omega(B)\big)$ while the identification
of $ \Def_A(D)$ with the 2nd Hochschild cohomology group $H^2(A,A)$ is
classical~\cite[Theorem~2.3]{markl12:_defor}. So~(\ref{zase_srdce}) is in this case
identified with the map
\[
H^2(\Omega(B)) \longrightarrow H^2(A,A).
\]
By~(\ref{clas}), in the situation of the proposition
$H^2\big(\Omega(B)\big) \cong \Ext^2 \Der(A)$,
while, by the Hochschild-Kostant-Roitenberg theorem \cite[Theorem~9.1.3]{ginzburg},
$H^2(A,A) \cong  \Ext^2_A  \Der(A)$. This finishes the proof.
\end{proof}

\begin{corollary}
\label{sec:deform-coming-from}
Let $A$ be a smooth algebra having two mutually commutative
derivations $\theta_1$ and $\theta_2$ such that
\begin{equation}
\label{eq:30}
\theta_1 \land_A \theta_2 \not= 0 \ \mbox { in } \ \Der(A) \Ext_A \Der(A).
\end{equation}
Then $A$ admits a non-trivial deformation.
\end{corollary}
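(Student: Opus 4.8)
The plan is to deduce Corollary~\ref{sec:deform-coming-from} directly from Proposition~\ref{dnes_konference_v_Mulhouse}. The two commuting derivations $\theta_1,\theta_2 \in \Der(A)$ generate, via Case~1 of Example~\ref{sec:recoll-class-results-31}, a $B$-module algebra structure on $A$ for $B = \bfk[p_1,p_2] = \calU(L)$ with $L$ the two-dimensional abelian Lie algebra; by Example~\ref{sec:recoll-class-results-31}, Case~3, this action factors through the tautological action of $B' := \calU\big(\Der(A)\big)$, so we may equally well work with $B'$ acting on $A$ as in the hypothesis of Proposition~\ref{dnes_konference_v_Mulhouse}. The element $\theta_1 \land \theta_2 \in \Der(A) \Ext \Der(A) = \Ext^2\Der(A)$ is a nonzero class (it is nonzero already over $\bfk$ because its image $\theta_1 \land_A \theta_2$ in $\Der(A)\Ext_A\Der(A)$ is nonzero by~(\ref{eq:30}), and the natural map $\Ext^2\Der(A) \to \Ext^2_A\Der(A)$ kills it only if it vanishes). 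In particular it represents a nonzero element of $H^2\big(\Omega(B')\big) \cong \Ext^2\Der(A)$, hence, by Proposition~\ref{eq:11} and Proposition~\ref{sec:formal-deformations-2} (with $R = D$, $I = (t)$, $I^2 = 0$ so that $B'\cot I$ is commutative), a nonzero element of $\udf_{B'}(D)$.

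The key step is then to feed this class into the map $\mfI_{B',A}(D) : \udf_{B'}(D) \to \Def_A(D)$ of~(\ref{zase_srdce}) and observe that it survives. Proposition~\ref{dnes_konference_v_Mulhouse} identifies $\mfI_{B',A}(D)$ with the natural map $\Der(A)\Ext\Der(A) \to \Der(A)\Ext_A\Der(A)$, whose kernel consists precisely of those wedges that become $A$-linearly dependent. By the standing hypothesis~(\ref{eq:30}), $\theta_1\land_A\theta_2 \neq 0$, so the class of $\theta_1\land\theta_2$ is \emph{not} in the kernel; therefore its image in $\Def_A(D) \cong H^2(A,A)$ is nonzero. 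A nonzero element of $\Def_A(D)$ is exactly a nontrivial infinitesimal deformation of $A$, which a fortiori is a nontrivial deformation. Concretely, this deformation is the first-order part of the product $f * g = \mu_A\big(F(f\ot g)\big)$ with $F = 1\otred 1 + \frac t2(\theta_1\ot\theta_2 - \theta_2\ot\theta_1)$, i.e.\ the bilinear cochain $(f,g) \mapsto \tfrac12\big(\theta_1(f)\theta_2(g) - \theta_2(f)\theta_1(g)\big)$, which is a Hochschild $2$-cocycle not cohomologous to zero.

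There is essentially no hard computational step: Proposition~\ref{dnes_konference_v_Mulhouse} has already done the work of computing both moduli functors and identifying $\mfI_{B',A}(D)$ with a purely linear-algebraic map. The only point requiring a word of care is the passage from $B = \bfk[p_1,p_2]$ to $B' = \calU\big(\Der(A)\big)$ — one must check that the UDF built from $\exp\big[\tfrac t2(p_1\ot p_2 - p_2\ot p_1)\big]$ on $B$ is carried, under the operad map $\bbB_B \to \bbB_{B'}$ induced by $L \to \Der(A)$ (equivalently, under the factorization noted in Case~3 of Example~\ref{sec:recoll-class-results-31}), to the class $\theta_1\land\theta_2 \in H^2\big(\Omega(B')\big)$; but this is immediate from naturality of the logarithmic trick, since $\log F = \tfrac t2(p_1\ot p_2 - p_2\ot p_1) \mapsto \tfrac t2(\theta_1\ot\theta_2 - \theta_2\ot\theta_1)$, whose class in $\twe(B') = H^2\big(\Omega(B')\big)$ is $\theta_1\land\theta_2$ by~(\ref{eq:8}). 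Thus the proof is a short assembly of the preceding results, and I do not anticipate any genuine obstacle.
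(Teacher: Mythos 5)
Your proof is correct and follows essentially the same route as the paper's: pass from $B=\bfk[p_1,p_2]$ to $\calU\big(\Der(A)\big)$ via Case~3 of Example~\ref{sec:recoll-class-results-31}, reduce to the infinitesimal deformation over $D$, and invoke Proposition~\ref{dnes_konference_v_Mulhouse} to identify nontriviality with condition~(\ref{eq:30}). The extra detail you supply (the explicit first-order cochain and the naturality of the logarithmic trick) is consistent with, and slightly amplifies, the paper's argument.
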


\begin{proof}
Let $B := \bfk[p_1,p_2]$. The formula $p_ia :=
\theta_i(a)$, $i=1,2$, $a \in A$, defines a $B$-module algebra action on
$A$ and the universal deformation formula 
$F$ from~(\ref{maji_na_mne_neco?}) a deformation of $A$.

Consider the composition
\begin{equation}
\label{internet_stale_nejde}
\udf_B\big(\bfk[[t]]\big) \longrightarrow
\udf_{\calU\Der(A)}\big(\bfk[[t]]\big) \longrightarrow \udf_{\calU\Der(A)}(D)
\longrightarrow \Def_{\calU\Der(A)}(D)
\end{equation}
whose first arrow is induced by the natural map $B = \bfk[p_1,p_2] \to
\calU\Der(A)$ that sends $p_i$ to $\theta_i$, $i =
1,2,$\footnote{Cf.~the universality of the Lie algebra of derivations
mentioned in Case~3 of Example~\ref{sec:recoll-class-results-31}.} the
second map is induced by the ring change $\bfk[[t]] \to \bfk[t]/(t^2) =
D$ and the last map is $\mfI_{\calU\Der(A),A}(D)$.

To prove that the deformation induced by $F$ is non-trivial, it is enough to show
the nontriviality of the 
induced infinitesimal deformation. In other words, we must show that $F$ is
mapped by the composition~(\ref{internet_stale_nejde}) 
to a non-trivial deformation.  By
Proposition~\ref{dnes_konference_v_Mulhouse} this means 
that $\theta_1 \land_A \theta_2$ is non-zero in $\Ext^2_A \Der(A)$. 
\end{proof}

\begin{remark}
\label{dnes_prileti_Nunyk}
Corollary \ref{sec:deform-coming-from} generalizes to an arbitrary
number of commuting derivations; we leave the
details as an exercise. The method of its proof can also be used for 
showing that a given deformation coming from an UDF (not necessarily based on a
commutative bialgebra) is nontrivial, or to prove that two or more
deformations of this type are non-equivalent. 
\end{remark}

Condition~(\ref{eq:30})  in Corollary \ref{sec:deform-coming-from} is
substantial. If
\[
A =
\bfk[p,q],\ \theta_1 := 
\frac{\partial \hphantom p}{\partial p} \ \mbox { and }
\ \theta_2 :=
q\frac{\partial \hphantom p}{\partial p},
\] 
then the induced deformation is trivial although $\theta_1 \not =
\theta_2 \not= 0$. 
Also the assumption that $A$ is smooth is
important. If e.g.
\[
A := \frac{\bfk[p,q]}{(p^2 = q^2 = pq = 0)},
\] 
then $\theta_1 := p{\partial
  }/{\partial p}$, $\theta_2 :=q{\partial
  }/{\partial q}$ are commuting derivations, $\theta_1 \land_A \theta_2
  \not= 0$, but the induced deformation is trivial.

Corollary~\ref{dnes_konference_v_Mulhouse} is an algebraic version of
the claim that the algebra of smooth functions on a smooth manifold of
dimension $\geq 2$ admits a non-trivial deformation, see~\cite{bordemann:defquant}.

\section{Generalizations}

The situation described by the diagram in~(\ref{eq:16}) generalizes as
follows. Suppose we have the following morphisms of non-$\Sigma$-operads:
\begin{itemize}
\item[(i)]
a `diagonal'
$\nabla : \mathcal{P} \to \hbox{$\mathcal{Q} \ot \mathcal{R}$}$,
\item[(ii)]
a `$\calQ$-twisting element' $\Phi : \mathcal{Q} \to \bbB$ and
\item[(iii)]
a $B$-module $\calR$-algebra structure $\alpha : \mathcal{R} \to \bEnd_A$.
\end{itemize}
Then the diagram
\begin{equation}
\label{eq:18}
\xymatrix{
\calP \ar[r]^{\nabla \hskip 1.4em} \ar@/_1em/[1,2]^\rho 
&
\calQ \ot \calR \ar[r]^{\Phi \ot \alpha \hskip .8em}
&
\bbB \ot \bEnd_A \ar[d]^\pi 
\\
&& \End_A
}
\end{equation}
determines  a $\calP$-algebra structure on $A$ via the composite map
$\rho : \calP \to \End_A$.

As in the classical case, we want to use $\calQ$-twisting
elements to deform $\calP$-algebras. To do so we
need a good notion of the trivial $\calQ$-twisting element, i.e.~the
one that does not twist. A moment's reflection tells us that this
amounts to a distinguished operad morphism
\begin{equation}
\label{eq:17}
\omega : \calQ \to \UAss 
\end{equation}
which defines the trivial $\calQ$-twisting element as the composition
\[
\Phi_0 : \calQ 
\stackrel{\omega}\longrightarrow \UAss \stackrel{\tr}{\longrightarrow} \bbB,
\]
where $\tr$ is the trivial $\UAss$\,-twisting element 
from Example~\ref{sec:an-explanation-3}.  The
`untwisted'
$\calP$-algebra structure on $A$ is then given by the composition
\begin{equation}
\label{eq:23}
\calP \stackrel{\nabla}{\longrightarrow} \calQ \ot \calR
\stackrel{\omega \ot \id}{\longrightarrow} \UAss \ot \calR \cong \calR
\stackrel{\alpha}{\longrightarrow} \bEnd_A. 
\end{equation}

One may then define 
a $\calQ$-universal deformation formula (a~$\calQ$-UDF)
based on $B$ over $R$ as an operad morphism
\[
\Phi : \calQ \to \bbB \cot R
\]
such that $(\id \cot \eta) \circ \Phi = \Phi_0$. It leads,
via~(\ref{eq:18}), to a deformation of the untwisted $\calP$-algebra
structure~(\ref{eq:23}). We will see that in many situations of
interest there exists a canonical $\omega$ in~(\ref{eq:17}).  

\noindent {\bf Variants.}
If $B$ is cocommutative, one may consider versions of the above
objects in the category of $\Sigma$-operads. Instead of $\UAss$
in~(\ref{eq:17}) one then takes the operad $\UComm$ for unital
commutative associative algebras. For the study of structures without
constants (such as e.g.~Lie algebras), it is enough to 
work in the category of operads
without the arity $0$-piece. In this case, the bialgebra $B$ need not
have a counit, cf.~Proposition~\ref{bigB}.

There are surprisingly many situations where the above data
exist. First of all, $\UAss$ is the unit in the monoidal category of
non-$\Sigma$ operads, therefore for each non-$\Sigma$ operad $\calP$
one has the canonical `diagonal' $\nabla : \calP \to \UAss \ot \calP$
given by the isomorphisms 
\[(\UAss \ot \calP)(n) = \UAss(n) \ot \calP(n)
= \bfk \ot \calP(n) \cong \calP(n),\ n \geq 0.
\] 
We therefore have:

\begin{proposition}
\label{sec:generalizations}
The usual twisting element from Definition
\ref{sec:recoll-class-results} 
twists algebras over an arbitrary
non-$\Sigma$ operad $\calP$, and UDF's as in Definition
\ref{sec:formal-deformations1} determine their deformations.
\end{proposition}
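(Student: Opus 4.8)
The plan is to instantiate the general machinery of diagram~(\ref{eq:18}) with the trivial choices that make $\calP$ arbitrary. Take $\calQ := \UAss$ and $\calR := \calP$, and use as the diagonal $\nabla : \calP \to \UAss \ot \calP$ the canonical isomorphism coming from the fact that $\UAss$ is the monoidal unit in the category of non-$\Sigma$ operads, exactly as displayed just before the statement. For the $\calQ$-twisting element take $\Phi : \UAss \to \bbB$ to be the operad morphism corresponding, under Proposition~\ref{sec:an-explanation-4} (or its rescaled version, Remark~\ref{fn:1}), to the given ordinary twisting element $F \in B\otred B$ of Definition~\ref{sec:recoll-class-results}. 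For the $\calR$-algebra structure take $\alpha : \calP \to \bEnd_A$ to be the given $B$-module $\calP$-algebra structure on $A$. Feeding these into~(\ref{eq:18}) produces the composite $\rho : \calP \to \End_A$, which by definition is the twisted $\calP$-algebra structure on $A$; this proves the first assertion.

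Next I would record that the `untwisted' structure~(\ref{eq:23}) is, in this case, literally the original $\calP$-algebra structure $\alpha$ (after forgetting the $B$-linearity): here $\omega : \UAss \to \UAss$ is the identity, so~(\ref{eq:23}) collapses to $\calP \stackrel{\nabla}{\to} \UAss\ot\calP \cong \calP \stackrel{\alpha}{\to}\bEnd_A$, which is just $\alpha$. Thus the trivial twisting element $1\otred 1$ of Example~\ref{sec:an-explanation-3} recovers the undeformed structure, and the construction genuinely is a deformation of the original $\calP$-algebra.

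For the second assertion about UDF's, I would simply note that everything above is functorial in $R$: replacing $B\otred B$ by $(B\otred B)\cot R$ and $\bbB$ by $\bbB\cot R$, a UDF as in Definition~\ref{sec:formal-deformations1} is, by Proposition~\ref{Jarunka_vcera_odletela_do_Prahy}, an operad morphism $\Phi : \UAss \to \bbB\cot R$ reducing to $\tr$ modulo $I$. The same diagram~(\ref{eq:18}), now formed over $R$, yields an $R$-linear operad morphism $\rho_R : \calP \to \End_A \cot R$ reducing modulo $I$ to the untwisted structure $\alpha$; this is precisely a deformation of the $\calP$-algebra $A$ over $R$. Equivalent UDF's give equivalent deformations by the same argument that establishes this in the classical case right after~(\ref{eq:7}), applied verbatim through $\pi$ and $\nabla$.

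Honestly, there is no real obstacle here: the statement is a formal corollary of the already-constructed general framework, and the only thing to be careful about is bookkeeping — checking that the canonical $\nabla : \calP \to \UAss\ot\calP$ really is a morphism of non-$\Sigma$ operads (immediate, since it is built from the unit isomorphisms of the monoidal category) and that composing morphisms of operads with $\pi$ from Proposition~\ref{pi} stays within $\End_A$. The mild subtlety worth a sentence is that this works for \emph{non-$\Sigma$} operads $\calP$ even when $B$ is not cocommutative, because $\UAss\ot\calP$ and $\bbB\ot\bEnd_A$ are taken as non-$\Sigma$ operads; the $\Sigma$-equivariant version requires $B$ cocommutative, as in the Variants paragraph, and I would flag that parenthetically rather than prove it.
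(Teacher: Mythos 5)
Your proposal is correct and follows exactly the route the paper intends: the paper gives no separate proof, deriving the proposition directly from the observation that $\UAss$ is the monoidal unit for non-$\Sigma$ operads, so that the canonical $\nabla : \calP \to \UAss \ot \calP$ can be fed into diagram~(\ref{eq:18}) with $\calQ = \UAss$, $\calR = \calP$, $\Phi$ the twisting element via Proposition~\ref{sec:an-explanation-4}, and $\omega = \id$ making~(\ref{eq:23}) collapse to $\alpha$. Your write-up simply makes explicit the bookkeeping (including the $R$-linear extension for UDF's via Proposition~\ref{Jarunka_vcera_odletela_do_Prahy}) that the paper leaves to the reader.
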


Similarly, $\UComm$ is the unit for $\Sigma$-operads, so for any
$\Sigma$-operad $\calP$ we have the diagonal $\nabla : \calP \to
\UComm \ot \calP$. It is easy to verify that a $\UComm$-twisting element
is a twisting element as in  Definition \ref{sec:recoll-class-results}
that is {\em symmetric\/}, i.e.~$F = \tau F$, where $\tau : B \otred B
\to B \otred B$ interchanges the factors. Twisting
elements with this property twist and deform algebras over an {\em arbitrary\/}
operad, i.e.~almost all `reasonable' algebras.

\begin{example}
\label{sec:generalizations-2}
There is a particularly important instance of the situation described
in the above paragraph: $\calP = \Lie$, the operad for Lie
algebras. Since Lie algebras are structures without constants, we may
work in the category of $\Sigma$-operads without the arity
$0$-part and consider the Lie version of the diagram~(\ref{eq:16}):
\[
\xymatrix{
\Lie \ar[r]^{\nabla \hskip 1.7em} \ar@/_1em/[1,2]^\rho 
&
\Com \ot \Lie \ar[r]^{\Phi \ot \alpha}
&
\bbB \ot \bEnd_L \ar[d]^\pi 
\\
&& \End_A
}
\]
in which 
\begin{itemize}
\item[(i)]
$\Com$ is the operad for commutative associative algebras and
$\nabla : \Lie \to \Com \ot \Lie$ the canonical diagonal,
\item[(ii)]
$\Phi : \Com \to \bbB$ is a symmetric twisting element and
\item[(iii)]
$\alpha : \Lie \to \bEnd_A$ is a given $B$-module Lie algebra
structure on $L$.
\end{itemize}
Explicitly, (ii) requires an element $F = \sum_i F^{(1)}_i \otred
F^{(2)}_i  \in \otexp B2$, for a possibly non-counital, cocommutative bialgebra
$B$, such that
\begin{equation}
\label{eq:24}
\begin{aligned}
{}[(\Delta \ot \id)(F)](F \ot 1) &= [(\id \ot \Delta)(F)](1 \ot F) \mbox
  { and } 
\\ 
\textstyle
\sum_i F^{(1)}_i \otred F^{(2)}_i &=\textstyle \sum_i F^{(2)}_i
  \otred F^{(1)}_i.
\end{aligned}
\end{equation}
Observe that~(\ref{d2}) is not required.
\end{example}

\noindent 
{\bf Set-theoretic case.}  As noticed in~\cite{markl-remm:JA06} and recalled in
Appendix~\ref{sec:operad-terminology}, each {\em set-theoretic\/}
operad $\calP$, i.e.~one that is a linearization of an operad defined
in category of sets, is a Hopf operad with the canonical diagonal
$\nabla : \calP \to \calP \ot \calP$. The prominent example is the
operad $\UAss$ for unital associative algebras. Since $\UAss$ is the
terminal object in the category of set-theoretic operads, it is
equipped with the canonical morphism $\omega : \calP \to
\UAss$. Another useful feature of the set-theoretic case is that the
logarithmic trick described on page~\pageref{sec:deformations-1} is
available.  Namely, one has an analog of
Proposition~\ref{sec:deformations}:

\begin{proposition}
If $\calP$ is a set-theoretic operad and $B \cot I$ commutative, then $\calP$-UDF's
based on B over $R$ are in one-to-one correspondence with operad morphism
$\calP \to \bbb \cot I$.
\end{proposition}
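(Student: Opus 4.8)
\noindent{\bf Proof proposal.}\
Following the proof of Proposition~\ref{sec:deformations}, the plan is to build the bijection from the formal logarithm and exponential, but now applied elementwise over the canonical linear basis of the set-theoretic operad $\calP$. Recall that a $\calP$-UDF is an operad morphism $\Phi : \calP \to \bbB \cot R$ with $(\id \cot \eta) \circ \Phi = \Phi_0$, where $\Phi_0 = \tr \circ \omega$ and $\omega : \calP \to \UAss$ is the canonical morphism to the terminal set-theoretic operad. Write $\calP(n) = \Span\big(P(n)\big)$, where $P = \{P(n)\}$ is the underlying set-level operad. For a basis element $p \in P(n)$ one has $\omega(p) = 1 \in \bfk = \UAss(n)$, hence $\Phi_0(p) = \otexp 1n$; the normalization on $\Phi$ thus says precisely that $\Phi(p) = \otexp 1n + F_p$ with $F_p \in \otexp Bn \cot I$ for every $p \in P(n)$. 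Since $\otexp Bn \cot I$ is a complete ideal, $\psi(p) := \log \Phi(p) \in \otexp Bn \cot I = \bbb(n)\cot I$ is well defined, and I would define $\psi : \calP \to \bbb \cot I$ as the $\bfk$-linear extension of $p \mapsto \log\Phi(p)$; the inverse construction sends $\psi$ to the linear extension of $p \mapsto \exp\big(\psi(p)\big)$.

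The computational core is an identity intertwining the two operadic compositions under $\exp$ and $\log$. First I would record that, for $F \in \bbB(m)$ and $G \in \bbB(n)$,
\[
F \circ_i^{\bbB} G = L_i(F)\cdot R_i(G),
\qquad
F \circ_i^{\bbb} G = L_i(F) + R_i(G),
\]
where $L_i := \id_B^{\ot (i-1)}\ot \Delta^{n-1}\ot \id_B^{\ot (m-i)} : \otexp Bm \to \otexp B{m+n-1}$, $R_i(G) := \otexp 1{i-1}\ot G\ot \otexp 1{m-i}$, and $\cdot$ is the multiplication of $\otexp B{m+n-1}$. Both $L_i$ and $R_i$ are algebra homomorphisms ($\Delta$, hence $\Delta^{n-1}$, is multiplicative, and $1$ is the unit), so each commutes with $\exp$ and $\log$; moreover $L_i(\otexp 1m) = R_i(\otexp 1n) = \otexp 1{m+n-1}$ since $1$ is group-like. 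As $B \cot I$ is commutative, so is the ideal $\otexp B{m+n-1}\cot I$, whence $L_i(F)$ and $R_i(G)$ --- each of the form $\otexp 1{m+n-1} + (\mbox{ideal})$ --- commute. Using $\log(XY) = \log X + \log Y$ for commuting $X,Y$ I then obtain, for $f \in \bbb(m)\cot I$ and $g \in \bbb(n)\cot I$,
\[
\log\big(\exp(f)\circ_i^{\bbB}\exp(g)\big)
= \log\big(L_i(\exp f)\cdot R_i(\exp g)\big)
= L_i(f) + R_i(g) = f\circ_i^{\bbb} g .
\]
Equivalently, $\exp$ carries $\circ_i^{\bbb}$ to $\circ_i^{\bbB}$ and sends the unit $0 \in \bbb(1)$ to the unit $1 \in \bbB(1)$; when $B$ is cocommutative it is in addition $\Sigma_n$-equivariant.

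With this identity the correspondence is formal. Given a $\calP$-UDF $\Phi$ and $\psi$ as above, to see that $\psi$ is an operad morphism I would verify $\psi(p\circ_i q) = \psi(p)\circ_i^{\bbb}\psi(q)$ on basis elements $p,q$; here the set-theoretic hypothesis enters decisively, because $p \circ_i q$ is again a basis element, so by definition $\psi(p\circ_i q) = \log \Phi(p\circ_i q) = \log\big(\Phi(p)\circ_i^{\bbB}\Phi(q)\big)$, which equals $\psi(p)\circ_i^{\bbb}\psi(q)$ by the displayed identity with $f = \psi(p)$, $g = \psi(q)$. Bilinearity of $\circ_i^{\bbb}$ and linearity of $\psi$ promote this to all of $\calP$, and the unit is preserved since $\psi(\id) = \log\Phi(\id) = \log 1 = 0$. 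Conversely, from a morphism $\psi : \calP \to \bbb\cot I$, setting $\Phi(p) := \exp\psi(p)$ on basis elements and extending linearly, the same identity shows $\Phi$ is an operad morphism into $\bbB\cot R$, while $(\id\cot\eta)\Phi(p) = \exp\big((\id\cot\eta)\psi(p)\big) = \exp 0 = \otexp 1n = \Phi_0(p)$ gives the UDF normalization. Since $\exp$ and $\log$ are mutually inverse on the relevant sets and both $\Phi$ and $\psi$ are determined by their restrictions to the $P(n)$, the two assignments are mutually inverse, establishing the bijection.

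I expect the main obstacle to be conceptual rather than computational, and to be exactly the reason for the set-theoretic hypothesis: the maps $\exp$ and $\log$ are not linear, so $\psi$ can only be defined by $\log$ on a distinguished basis, and the morphism property would not survive linear extension unless the operadic composite of two basis elements is again a single basis element --- which holds precisely for set-theoretic operads. A secondary point needing care is the commutativity input: I would check that $B\cot I$ commutative forces $\otexp Bk\cot I$ commutative (clear when $B$ is commutative, and when $I^2=0$ because then the ideal squares to zero), which is what justifies the additivity of $\log$ above. Arity-zero pieces, if present, together with $\Sigma$-equivariance in the cocommutative case, are routine.
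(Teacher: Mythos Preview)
Your approach is the paper's own --- reduce to the underlying set-operad and apply $\exp/\log$ elementwise --- but worked out in full detail; the identity $\log\big(\exp f \circ_i^{\bbB} \exp g\big) = f \circ_i^{\bbb} g$ via the factorizations $\circ_i^{\bbB} = L_i\cdot R_i$ and $\circ_i^{\bbb} = L_i + R_i$ is exactly the ``logarithmic trick'' the paper invokes.

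One genuine slip, however: $\circ_i^{\bbb}$ is \emph{not} bilinear. From your own formula $f\circ_i^{\bbb} g = L_i(f) + R_i(g)$ one has, e.g., $(2f)\circ_i^{\bbb} g = 2L_i(f) + R_i(g) \neq 2(f\circ_i^{\bbb} g)$. The operad $\bbb$ lives in the \emph{cartesian} monoidal category of vector spaces (monoidal product $\oplus$), so its composition is a linear map out of $\bbb(m)\oplus\bbb(n)$, not a bilinear map out of $\bbb(m)\ot\bbb(n)$. Consequently your step ``bilinearity of $\circ_i^{\bbb}$ and linearity of $\psi$ promote this to all of $\calP$'' fails as written --- and is in any case unnecessary. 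As the paper's proof makes explicit, the phrase ``operad morphism $\calP\to\bbb\cot I$'' is to be read as a morphism of set-level operads $\calS\to\bbb\cot I$ (equivalently, $\calP\to\bbB\cot R$ is the same as a set-operad map $\calS\to\bbB\cot R$, and only the multiplicative structure of $B$ enters on that level). Your verification on basis elements $p\in P(n)$ is therefore already the whole argument; drop the linear-extension claim and you are done.
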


\begin{proof}
By assumption,  $\calP \cong \bfk \otred \calS$ for
some operad $\calS$ in the category of sets. Operad maps $\calP \to
\bbB \cot R$ are then the same as operad maps $\calS \to
\bbB \cot R$, where $\bbB \cot R$ is considered as an operad in the
category of sets. In the last description, only the multiplication,
not the addition, of
$B$ is used, thus the logarithmic trick applies. 
\end{proof}

Several other examples of diagonals $\nabla : \mathcal{P} \to
\hbox{$\mathcal{Q} \ot \mathcal{R}$}$, sometimes very exotic, can be found in
\cite{goze-remm:tensor}. 
As far as $B$-module $\calR$-algebra structures are concerned,  
one has an obvious analog of
Proposition~\ref{sec:recoll-class-results-3}, with the map~(\ref{jdu_si_zabehat})
replaced by $\alpha : L \to \Der_\calR(A)$,
where $\Der_\calR(A)$ denotes the Lie algebra of derivations of the
$\calR$-algebra $A$.

\begin{example}
\label{sec:generalizations-1}
A {\em partially associative ternary algebra\/} $A$ is a structure with a ternary
operation $A \ni a,b,c \mapsto (a,b,c) \in A$ such that
\[
\big(a,b,(c,d,e)\big) + \big(a,(b,c,d),e\big) + \big((a,b,c),d,e\big) =0.
\]
We use this randomly chosen type of algebras to
illustrate Proposition~\ref{sec:generalizations} and demonstrate how our
theory applies to algebras having other than binary
operations. We could as well use 
other types of algebras over non-$\Sigma$-operads, as
diassociative, dipterous, dendriform, duplicial, Sabinin, 
totally associative ternary, \&.,  see~\cite{zinbiel11:_encyc} for
definitions.

Partially associative ternary algebras are algebras over the
non-$\Sigma$-operad $\pAss$~\cite{markl-remm}. 
To see how an ordinary twisting element determines a $\pAss$-twisting,
we need to track the maps in diagram~(\ref{eq:18}) with $\calP = \calR
= \pAss$ and $\calQ = \UAss$, i.e.~the composition
\begin{equation}
\label{eq:19}
\pAss \stackrel\nabla\longrightarrow \UAss \ot \pAss 
\stackrel{\Phi \ot \alpha}\longrightarrow \bbB \ot \bEnd_A
\stackrel{\pi}\longrightarrow \End_A.
\end{equation}
The first map $\nabla$ takes the generator $\nu \in \pAss(3)$ for the
ternary operation $(-,-,-)$ to $\mu_3 \ot \nu$, where
\[
\mu_3 := \mu_2 \circ_1 \mu_2 = \mu_2 \circ_2 \mu_2 \in \UAss(3).
\]
The element $H := \Phi(\mu_3) \in \otexp B3$ is determined by the
usual twisting element $F = \Phi(\mu) \in \otexp \bbB2$ by the formula
\begin{equation}
\label{eq:20}
H = \Phi(\mu_3) = \Phi(\mu_2 \circ_1 \mu_2) = F \circ_1 F = 
[(\Delta \ot \id)(F)](F \ot 1).
\end{equation}

To see how $H$ twists $(a,b,c)$, we need to finish
reading~(\ref{eq:19}). Invoking the definition of~$\pi$, we easily
get the formula for the twisted ternary product $(a,b,c)'$, namely
\[
(a,b,c)' =\textstyle \sum_i (H^{(1)}_i a, H^{(2)}_i b, H^{(3)}_i c) 
\]
where we wrote  $H = \sum_i H^{(1)}_i \otred H^{(2)}_i \otred H^{(3)}_i$.

Taking $B = \bfk[p_1,p_2]$ and $F = \exp\big[t(p_1 \otred p_2 - p_2
\otred p_1)\big]$ as in Example~\ref{sec:examples}, (\ref{eq:20})
gives the $\pAss$-universal deformation formula
\begin{equation}
\label{eq:21}
H = \exp\big[t\{(p_1 \otred p_2 - p_2\otred p_1) \otred 1  
+  p_1 \otred 1 \otred p_2 - 
p_2 \otred 1 \otred p_1  + 1 \otred  (p_1 \otred p_2 - p_2\otred p_1)\}\big].
\end{equation}
\end{example}

\begin{example}
The UDF~(\ref{eq:21}) from Example
\ref{sec:generalizations-1} leads to deformations via $B$-module
$\pAss$-algebra actions.  Let us give an example of such an action.
A ternary algebra $A$ is {\em symmetric\/}~if
\[
(a_1,a_2,a_3) = (a_{\sigma(1)},a_{\sigma(2)},a_{\sigma(3)}), 
\] 
for any $a_1,a_2,a_3 \in A$ and a permutation $\sigma \in \Sigma_3$.
Let $\bfk^{(p3)}[p,q]$ be the free symmetric
partially associative ternary algebra\footnote{The algebra $\bfk^{(p3)}[p,q]$ as
  well as its deformation that we define below can
  be described very explicitly.}  and $p^2 \frac{\partial \hphantom {p}}{\partial
  p}$ resp.~$q^2 \frac{\partial \hphantom {q}}{\partial  q}$ derivations of
$\bfk^{(p3)}[p,q]$ determined~by
\[
p^2 \frac{\partial\hphantom {p}}{\partial  p} (p) := (p,p,p) , \ 
p^2 \frac{\partial\hphantom {p}}{\partial  p} (q) := 0, \
q^2 \frac{\partial\hphantom {q}}{\partial  q} (p) := 0, \ \mbox { and }\ 
q^2 \frac{\partial\hphantom {q}}{\partial  q} (q) := (q,q,q).
\]
The derivations $p^2 \frac{\partial}{\partial p}$ and $q^2
\frac{\partial}{\partial  q}$ commute with each other, so they define
a $\bfk[p_1,p_2]$-module $\pAss$-algebra action on $\bfk^{(p3)}[p,q]$ by 
\[
p_1a := p^2 \frac{\partial \hphantom {p}}{\partial p} (a) \ \mbox { and }  
p_2a := q^2 \frac{\partial\hphantom {q}}{\partial q} (a),\ a \in  \bfk^{(p3)}[p,q].
\]
Via this action,~(\ref{eq:21}) gives a (non-symmetric)
deformation of the ternary algebra $\bfk^{(p3)}[p,q]$ which is a ternary
analog of the quantum plane. 

The above construction may seem formal and shallow, but
its complexity and non-triviality is the same as that of the quantum
plane, we only worked in the category of ternary partially associative
algebras instead of the category of associative algebras.
A similar `$\calP$-quantum plane' can be
constructed for an arbitrary non-$\Sigma$ operad. We leave the details
to the reader.
\end{example}

\begin{example}
This example illustrates the set-theoretic case. An
{\em interchange algebra\/} has two binary operations
$\bullet, \circ : A \ot A \to A$ that satisfy
\[
(x\bullet y)\circ (z\bullet t) = (x\circ z)\bullet (y\circ t),
\]
see e.g.~\cite{zinbiel11:_encyc}. Since the corresponding operad $\Int$ is
set-theoretic, one may take in~(\ref{eq:18}) $\calP = \calQ
= \calR =: \Int$.
By expanding the definition of $\bbB$ one easily sees that an
$\Int$-twisting cocycle $\Phi : \Int \to \bbB$ is given by a couple
$(F',F'')$ with 
$F',F'' \in B \otred B$ such that
\begin{equation}
\label{eq:22}
\tau_{1324}\{(\Delta \ot \Delta)(F')(F'' \ot F'')\}
=
(\Delta \ot \Delta)(F'')(F' \ot F'),
\end{equation}
where $\tau_{1324}$ permutes the factors of $\otexp B4$ according to
the permutation $(1,2,3,4) \mapsto (1,3,2,4)$.

An example of $(F',F'')$ solving~(\ref{eq:22}) is provided by
\[
F' := a \ot b,\ F'' := c \ot d,
\]
where $a,b,c,d \in B$ are grouplike elements.
Other types of algebras over set-theoretic operads are listed
in~\cite{zinbiel11:_encyc}. 
\end{example}

\noindent 
{\bf Diagrams}. Our theory generalizes to {\em diagrams\/} 
of algebras. Recall
that a diagram of $\calP$-algebras is a functor 
$\A:\Diag \to \hbox{$\calP$-{\it alg}}$
from a small category $\Diag$ encoding the shape of the diagram, into the
category of $\calP$-algebras. Diagrams of $\calP$-algebras are algebras over a
certain colored operad $\calP_\Diag$ whose set of colors $C$ equals
the set of objects (nodes) of $\Diag$. The details can be found
in~\cite{markl:ha}. 

Diagrams $\A$ as above are acted on by
the opposite diagrams $\B: \Diag^{\it op} \to {\it Bialg}$ of
bialgebras. Every such a diagram $\B$ defines a $C$-colored version
$\bbB_\Diag$ of the operad $\bbB$ whose explicit construction we leave
as an exercise.

Rather than to explain general theory, we consider the special case
when $\calP = \UAss$. 
Assume we have a diagram $\A$ of associative unital algebras and an
opposite diagram $\B$ of bialgebras. We say that $\A$ is a (left)
$\B$-module diagram of associative algebras if

\begin{itemize}
\item[(i)] 
for each $v \in \Diag$, $A_v$ is a left
$B_v$-module algebra, where $A_v := \A(v)$ and $B_v := \B(v)$, 
\item[(ii)] 
for each arrow $v' \stackrel r\to v''$ in $\Diag$, each $a \in
A_{v'}$ and $b \in B_{v''}$, 
\[
b\hh_r(a)  =\hh_r\big(\phi_r(b)a\big),
\]
where $\hh_r := \A(r)$ and $\phi_r := \B(r)$.
\end{itemize}

In this situation we have a colored version of the 
diagram~(\ref{eq:18}) with $\calP
= \calQ = \calR := \UAss_\Diag$, instead of $\bbB$ the colored operad $\bbB_\Diag$
and instead of $\End_A$ resp.~$\bEnd_A$ the corresponding colored 
endomorphism operads.

\begin{example}
If $\Diag := 1 \to 2$, then 
$\A:\Diag \to \hbox{$\calP$-{\it alg}}$ is precisely a morphism $\hh:
A_1 \to A_2$ of associative algebras and 
$\B : \Diag^{\it op} \to {\it Bialg}$ is a morphism $\phi :
B_2 \to B_1$ of bialgebras. Saying that $\A$ is a $\B$-module diagram
of associative algebras in this particular case means that 
$A_i$ is a $B_i$-module algebra, $i = 1,2$,
and $b\hh(a) = \hh\big(\phi(a)b\big)$ for each $a \in A_1$, $b \in B_2$.

Twisting element for such a situation is a triple $(F_1,G,F_2)$ such
that

\begin{itemize}
\item[(i)] $F_i \in \otexp{B_i}2$ is a twisting element based on $B_i$
as in Definition
\ref{sec:recoll-class-results}, $i = 1,2$, and
\item[(ii)] 
$G \in B_1$ satisfies
$\Delta(G) F_1 = (\phi \ot \phi)(F_2) (G \ot G)$. 
\end{itemize}

Such a triple  $(F_1,G,F_2)$ has the property that if
$(A_i,*_i)$ is the algebra $A_i$ with the multiplication twisted
by $F_i$ as in Theorem \ref{sec:recoll-class-results-2}, $i=1,2$, 
then $\tilde \hh : A_1 \to A_2$  defined by 
\[
\tilde \hh (a) := \hh(G a),\ a \in A,
\]
is a morphism of these twisted algebras.

Notice that if $G$ is invertible,
one can rewrite (ii) into
\[
\Delta(G) F_1  (G^{-1} \ot G^{-1})  = (\phi \ot \phi)(F_2)
\]
and replace the triple $(F_1,G,F_2)$ by the gauge
equivalent $(F'_1,1,F_2)$, with 
\[
F'_1 := \Delta(G) F_1  (G^{-1} \ot G^{-1}).
\] 
In other words, in the case of a single arrow,
each deformation coming from a UDF is gauge equivalent to one with $\hh$
unperturbed.  

As a very explicit example, let $\A$ be the diagram of associative
algebras with $A_1=A_2 := \bfk[p,q]$ and $\hh : \bfk[p,q] \to \bfk[p,q]$
the morphism given by $\hh(p) := p^m$, $\hh(q) := q^m$, for some integers
$m,n \geq 1$. Let $\B$ be the diagram of bialgebras given by $B_1 =
B_2 := \bfk[p_1,p_2]$ and $\phi : B_2 \to B_1$ be the identity.  

Let $B_1$ acts on $A_1$ as in the case of the quantum plane recalled
in Example~\ref{sec:examples}, i.e.~by
formula~(\ref{dopustil_jsem_se_neceho?}), and $B_2$ acts on $A_2$ by
\[
p_1 f :=  \frac{p^m}{m}\frac{\partial f}{\partial p} \ \mbox { and } \
p_2 f :=  \frac{q^n}{n}\frac{\partial f}{\partial q}.
\]
The above
data determine a $\B$-module structure on the diagram $\A$ and the triple
\[
\Big(\exp\big[t(p_1 \ot p_2 - p_2 \ot p_1)\big],
1,\exp\big[t(p_1 \ot p_2 - p_2 \ot p_1)\big]\Big)
\]
is a universal deformation formula based on $\B$.

The corresponding deformed  $A_1$ is the quantum plane 
and the product in the deformed $A_2$ is given by
\[
f * g := \exp\left[t\left(  p^m
\frac{\partial  \hphantom{f}}{\partial p} \ q^n  \frac{\partial
    \hphantom{g}}{\partial q} -  q^n \frac{\partial
  \hphantom{g}}{\partial q}\  p^m
\frac{\partial
    \hphantom{f}}{\partial p}\right)\right] (f \ot g).
\] 
The undeformed $\hh : A_1 \to A_2$ is a monomorphism of deformed algebras which an
isomorphism if and only if $(m,n) = (1,1)$.  
\end{example}

We saw in the previous example that deformations of morphisms of
algebras coming from UDF's are gauge equivalent to a deformation with
the undeformed morphism. This is not very surprising since the
one-arrow diagram is topologically trivial. For e.g.~$\Diag$ the
triangle
\[
{
\unitlength=1.000000pt
\begin{picture}(60.00,30.00)(0.00,5.00)
\thicklines
\put(0.00,0.00){\makebox(0.00,0.00){$\bullet$}}
\put(60.00,0.00){\makebox(0.00,0.00){$\bullet$}}
\put(30.00,30.00){\makebox(0.00,0.00){$\bullet$}}
\put(60.00,0.00){\vector(-1,0){60.00}}
\put(30.00,30.00){\vector(1,-1){30.00}}
\put(0.00,0.00){\vector(1,1){30.00}}
\end{picture}}
\]
such a reduction is not possible.

\section*{Open problems}

\noindent
{\bf Problem 1.}
Does there exist a nontrivial symmetric universal twisting formula
based on a cocommutative bialgebra,
that is, an element $F \in \otexp B2$  of the form~(\ref{eq:2})
satisfying equations~(\ref{eq:24})?
As explained in the paragraph following
Proposition~\ref{sec:generalizations}, such a UDF would deform all
`reasonable' algebras, including Lie and Poisson algebras. 
The Lie case is analyzed in detail in Example~\ref{sec:generalizations-2}.

\noindent
{\bf Problem 2.}
Does each deformation of an associative unital algebra $A$
come from a universal deformation formula?
I.e.,~is it true that for a given ring $R$ and an $R$-deformation 
of $A$ there
exists a bialgebra $B$ such that this deformation belongs to the image
of the map~(\ref{zase_srdce})?

We do not think that the above statement is true but we were unable to
find a counterexample. A more specific question is whether deformations
coming from universal deformation formulas have some characteristic
property that distinguishes them from arbitrary ones.

Notice that the unitality is crucial: let $A$ be an arbitrary
non-unital associative algebra with the product $\mu : \otexp A2 \to
A$. Then $t \mu$ is a formal deformation of the trivial multiplication
(the zero map) $\otexp A2 \to A$ which does not come from any
universal deformation formula.

\appendix

\section{The remaining proofs}
\label{Jaruska_dnes_letela_do_Prahy}

\begin{proof}[Proof of Proposition \ref{bigB}]
We need to verify that $\bbB$ fulfills the unitality and associativity
axioms for operads, see e.g. \cite[Definition~11]{markl:handbook}. The unitality
is obvious. Recall that the associativity means that 
for each $1 \leq j \leq a$, $b,c \geq 0$, $u\in
\bbB(a) = \otexp Ba$, $v \in \bbB(b) = \otexp Bb$ and $w \in \bbB(c)= \otexp Bc$,
\[
(u \circ_j v)\circ_i w =
\left\{
\begin{array}{ll}
(u \circ_i w) \circ_{j+c-1}v,& \mbox{for } 1\leq i< j,
\\
u \circ_j(v \circ_{i-j+1} w),& \mbox{for }
j\leq i~< b+j, \mbox{ and}
\\
(u \circ_{i-b+1}w) \circ_j v,& \mbox{for }
j+b\leq i\leq a+b-1.
\end{array}
\right.
\]
It clear that the 1st and 3rd cases of the above axiom hold
even for a non-associative $\Delta$. As for the 2nd case, let $u
= u_1 \ot \cdots u_a$. It is not difficult to compute that
\begin{align*}
(u \circ_j v)\circ_i w &=u_1 \ot \cdots \ot u_{i-1} \ot X'\cdot Y \ot u_{i+1} \ot 
\cdots \ot u_a,
\\
u \circ_j(v \circ_{i-j+1} w)  &
=u_1 \ot \cdots \ot u_{i-1} \ot X''\cdot Y \ot u_{i+1} \ot 
\cdots \ot u_a
\end{align*}
where $X',X'', Y \in \otexp B{b+c-1}$ are given by
\begin{align*}
X' := 
\Delta^{b+c-2}(u_j),\ &
X'' : =
(\otexp \id{i-1} \ot \Delta^{c-1} \ot
\otexp\id{b-i})\Delta^{b-1}(u_j), \mbox { and }
\\
Y := &
(\otexp \id{i-1} \ot \Delta^{c-1} \ot
\otexp\id{b-i})(v).
\end{align*}
The second case of the associativity is therefore satisfied if
$X' = X''$, i.e.~if
\begin{equation}
\label{eq:12}
\Delta^{b+c-2} =  (\otexp \id{i-1} \ot \Delta^{c-1} \ot
\otexp\id{b-i})\Delta^{b-1}.
\end{equation}
But the last equation is clearly implied by the coassociativity of $\Delta$.
Observe that~(\ref{eq:12}) for $b=c=2$, $i=1,2$ gives
$(\Delta \ot \id)\Delta = \Delta^2 = (\id \ot \Delta)\Delta$, so the
coassociativity of $\Delta$ is also necessary for $\bbB$ to be a
non-$\Sigma$ operad.

To prove the second part, one needs to verify the equivariance of the
$\circ_i$-product, cf. \cite[Definition~11]{markl:handbook} again. We
are not going to do it in full generality since it is straightforward
though notationally challenging. We only analyze one particular case
of the equivariance axiom which explains why $\Delta$ must be
cocommutative for $\bbB$ to be $\Sigma$-operad, namely
\[
u \circ_1 (v \tau) = (u \circ_1 v)\tau
\]
for $u\in \bbB(1)$, $v \in \bbB(2)$ and $\tau \in \Sigma_2$ the
transposition. With $v = v_1 \ot v_2$ this axiom reads
\[
\Delta (u) (v_2 \ot v_1) = \tau \Delta(u) (v_2 \ot v_1),
\]
where $\tau \Delta$ is the transposed diagonal. It
is fulfilled if $\Delta = \tau \Delta$, i.e.~if $\Delta$ is
cocommutative. 
\end{proof}

\begin{proof}[Proof of Proposition~\ref{sec:an-explanation-1}]
The operad $\bbB$ of Proposition~\ref{bigB} determined by $B$
obviously has the required properties. On the other
hand, for $\calP$ as in the proposition put $B:= \calP(1)$ with the
unital associative multiplication $\circ_1 : B \otred B \to B$, the
comultiplication given by the formula $\Delta(b) := b\circ_1 (1\ot 1)$
for $b \in B$, and the counit
\[
\epsilon: B \cong B \ot \bfk
\cong \calP(1) \ot \calP(0)   \stackrel{\circ_1}{\to} \calP(0) \cong \bfk.
\]
It is straightforward to verify that the bialgebra axioms are satisfied.
\end{proof}

\begin{proof}[Proof of Lemma~\ref{sec:an-explanation-2}]
It is simple to prove that a composition of $B$-linear maps is
$B$-linear. This proves that $\bEnd_M$ is a non-$\Sigma$ suboperad of $\End_M$.

To prove it is a $\Sigma$-suboperad, we must also verify that the
subspace of $B$-linear maps is closed under the symmetric group
action. Let $\phi \in \bEnd_M(n)$ and $\sigma \in \Sigma_n$. Then, for
each $m_1,\ldots,m_n \in M$ and $b \in B$,
\[
b (\phi\sigma)(m_1 \otimes \cdots \ot m_n) = b\phi(m_{\sigma(1)} \otimes
\cdots \ot m_{\sigma(n)}) = \phi\big(\Delta^{n-1}(b)(m_{\sigma(1)} \otimes
\cdots \ot m_{\sigma(n)})\big)
\]
by the $B$-linearity of $\phi$ and the definition of the
$\Sigma_n$-action. On the other hand, the $B$-linearity of $\phi\sigma$
means that
\[
b (\phi\sigma)(m_1 \otimes \cdots \ot m_n)=
(\phi\sigma)\big(\Delta^{n-1}(b)(m_{\sigma(1)} \otimes
\cdots \ot m_{\sigma(n)})\big)  =
\phi\big(\sigma\Delta^{n-1}(b)(m_{\sigma(1)} \otimes
\cdots \ot m_{\sigma(n)})\big)
\]
here $\sigma\Delta^{n-1}$ denotes $\Delta^{n-1}$ with the coordinates
permuted according to $\sigma$. If $\Delta$ is cocommutative, 
$\sigma\Delta^{n-1} = \Delta^{n-1}$, so $\phi\sigma$ is $B$-linear as well.
\end{proof}

\section{Operad lingo}
\label{sec:operad-terminology}

In this part of the appendix we recall most basic notions of the
operad theory in hope to make the paper readable without their
preliminary knowledge.  There exist a rich and easily accessible literature
devoted to operads, for instance the monograph
\cite{markl-shnider-stasheff:book}, overview articles
\cite{getzler:operads-revisited,markl:pokroky,markl:handbook} or a recent
account~\cite{loday-vallette}.

Operads are devices describing algebraic structures. Roughly, each
algebra whose axioms are homogeneous in the number of variables and do
not use repeated variables and quantifiers, is an algebra over a
certain operad.
Most of algebras one meets in everyday life, such as associative,
commutative, Lie, Poisson, \&c, are therefore algebras over operads.
 
More specifically, an {\em operad\/} (in the monoidal category of
vector spaces) is a 
collection $\calP = \{\calP(n)\}_{n \geq 0}$ of vector
spaces such that
\begin{itemize}
\item[(i)]
there are `circ-$i$' operations $\calP(m) \ot \calP(n) \to
\calP(m+n-1)$, $m,n \geq 0$, $1 \leq i \leq m$, and
\item[(ii)]
each $\calP(n)$ is a right module over the symmetric group $\Sigma_n$,
\item[(iii)] there is the operad unit $1\in \calP(1)$.
\end{itemize}

The above data of course ought to satisfy suitable axioms.
The prominent example is the endomorphism operad $\End_V$ of a vector
space $V$. Its $n$-th component is the space of linear
maps $\otexp Vn \to V$. An {\em algebra\/} over $\calP$ is, by definition,
an operad morphism $\calP \to \End_V$.

\noindent 
{\bf Variants.}
One sometimes does not require the actions in (ii) above. Such operads
are called {\em non-$\Sigma$\/} or {\em non-symmetric operads\/}. 
As may important operads in
this paper are non-$\Sigma$, 
we call an operad with the symmetric group action a {\em $\Sigma$-operad\/}. 
Algebras over non-$\Sigma$ operads
are algebras whose axioms do not involve permutations of
variables. For instance,
associative algebras are algebras over a non-$\Sigma$ operad while
Lie  or commutative associative algebras are~not.

Another type of operads are {\em set-theoretic operads\/},
i.e.~operads that are $\bfk$-linear envelopes of
operads defined in the cartesian monoidal category of sets. Algebras over
set-theoretic operads are algebras whose axioms can be written without the
addition. So associative or associative commutative algebras are
algebras over a set-theoretic operad, Lie or Poisson algebras are~not. 

Sometimes one does not require the existence of the space
$\calP(0)$. Operads of this type describe algebras without
constant operations, such as non-unital associative algebras,
non-unital associative commutative algebras, Lie
algebras, Poisson algebras, \&c.

\noindent 
{\bf Diagonal.}
Given operads $\calP$ and $\calQ$, one can form their (tensor) product
$\calP \otred \calQ$ whose $n$th component, by definition, equals
$\calP(n) \otred \calQ(n)$. A {\em Hopf operad\/} is
equipped with an operad morphism (a {\em diagonal\/}) $\nabla : \calP
\to \calP \otred \calP$. 

A typical Hopf operad is the operad $\UAss$
for unital associative algebras, the operad $\Lie$ for Lie algebras is
not a Hopf operad. It is easy to show that each
set-theoretic operad is a Hopf operad. On the level of algebras,
$\calP$ being Hopf means that the tensor product $A' \otred A''$ of
two $\calP$-algebras is naturally a $\calP$-algebra again.

\def\cprime{$'$} \def\cprime{$'$}

\end{document}